\definecolor{gris}{rgb}{.5,.5,.5}
\newtheorem{construction}{Construction}[section]
\newtheorem{theorem}[construction]{Theorem}
\newtheorem{corollary} [construction]{Corollary}
\newtheorem{definition} [construction]{Definition}
\newtheorem{lemma} [construction]{Lemma}
\newcommand{\cycles}{\operatorname{cycles}}
\newcommand{\Forest}{\operatorname{Forest}}
\newcommand{\Cycles}{\operatorname{Cycles}}
\begin{document}

\begin{abstract} 
    In the present work we prove that given any two unicycle graphs (pseudoforests)
    that share the same degree sequence there is a finite sequence of 2-switches
    transforming one into the other such that all the graphs in the sequence
    are also unicyclic graphs (pseudoforests).
\end{abstract}
\begin{keyword}
    2-switch\sep degree sequence\sep unicyclic graphs\sep pseudoforests\sep stability\sep Berge family.
    \MSC 05A15 \sep 05A19.
\end{keyword}

\begin{frontmatter}

    \title{2-switch transition on unicyclic graphs and pseudoforest}
    
    \author[daj]{Daniel A. Jaume\corref{cor1}}
    \ead{djaume@unsl.edu.ar}
    
    \author[daj]{Adr\'{\i}an Pastine}
    \ead{agpastine@unsl.edu.ar}
    
    \author[daj]{Victor N. Schv\"{o}llner}
    \ead{schvollner@unsl.edu.ar}
    \cortext[cor1]{Corresponding author: Daniel A. Jaume}
    
    \address[daj]{Departamento de Matem\'{a}ticas, \\
        Facultad de Ciencias F\'{\i}sico-Matem\'{a}ticas y Naturales,\\
        Universidad Nacional de San Luis,\\
        Instituto de Matem\'atica Aplicada San Luis (UNSL-CONICET),\\ 
        Universidad Nacional de San Luis,\\
        San Luis, Rep\'{u}blica Argentina.}
    \date{Received: date / Accepted: date}
\end{frontmatter}



%
%

\section{Introduction}\label{intro}
Every graph $G=(V,E)$ in the present article is finite, simple, undirected and labeled. We use \(|G|\) and $\|G\|$ to denote the order of \(G\)  and the size of $G$  respectively. In this work the set of vertices of $G$ is always a subset of $[n]:=\{1,\ldots  ,n\}$, for some $n$. When there may be ambiguity we use  \(V(G)\) and \(E(G)\) to denote the vertex set and the edge set of $G$, otherwise we just use $V$ and $E$. Vertex adjacency is denoted by $x\sim y$, and we denote the edge \(\{x,y\}\) by $xy$ (i.e., we say that $xy\in E$). The number of connected components of a graph \(G\) is denoted by  \(\kappa(G)\) and its set of components by \(\mathcal{K}(G)\). The subgraph of \(G\) obtained by deleting vertex \(v\) is denoted by \(G-v\). Similarly, \(G-e\) is the subgraph of \(G\) obtained by deleting edge \(e\), \(G+e\) or \(G+ab\) is the graph obtained by adding an edge to \(G\). If $W$ is a set of vertices (edges) of a graph $G$, $G-W$ denotes the subgraph obtained by deleting the vertices (edges) in $W$.

The degree sequence of a graph $G$ with vertex set $V(G)=[n]$ is the sequence $s(G)=(d_{1}, \dots, d_{n})$, where $d_i$ is the degree of vertex $i$. We assume that \(d_{1}\geq d_{2}\geq \cdots \geq d_{n}\).
A sequence $s=(d_{1}, \dots, d_{n})$ is graphical if there is a graph such that $s$ is its degree sequence.

Let \(s=(d_{1},\dots,d_{n})\) be a graphical sequence. By \(\mathcal{U}(s)\) we denote the set of all the unicyclic graphs  (connected graphs with just one cycle) with  degree sequence \(s\), and by \(\mathcal{P}(s)\) we denote the set of all the pseudoforests (graphs whose connected components are trees or unicyclic graphs) with  degree sequence \(s\).

The name pseudoforest is justified by analogy to the more commonly studied trees and forests. In \cite{Gabow@spanning1988} the authors attribute the study of pseudoforests to Dantzig's 1963 book on linear programming, see \cite{dantzig@linear1963}, in which pseudoforests arise in the solution of certain network flow problems. Pseudoforests also form graph-theoretic models of functions and occur in several algorithmic problems. Pseudoforests are sparse graphs (their number of edges is linearly bounded in terms of their number of vertices, in fact, they have at most as many edges as they have vertices)  and their matroid structure allows several other families of sparse graphs to be decomposed as unions of forests and pseudoforests. The name ``pseudoforest'' was first used in \cite{picard@network1982}.

One of the most studied problems in the literature is: given a graph parameter (clique number, domination number, matching number, etc.), finding the minimum and maximum values for the parameter in a family of graphs, see \cite{BockRat,GHR1,GHR2,wang2008extremal}.
Another interesting problem is deciding which values between the minimum and the maximum can be realized by a graph in the family, see \cite{kurnosov2020set,Rao,jaume20202switch}.

Let $G$ be a graph containing four distinct vertices $a,b,c,d$ such that $ab,cd\in E$ and $ac,bd\notin E$. The process of deleting the edges $ab$ and $cd$ from $G$ and adding $ac$ and $bd$ to $G$ is referred to as a 2-switch in $G$, this is a classical operation, see \cite{Berge}. Even though the order of the vertices matters, it is usual to talk about a 2-switch between the edges $\{a,b\}$ and $\{c,d\}$. If $G'$ is the graph obtained from $G$ by a 2-switch, it is straightforward to check that $G$ and $G'$ have the same degree sequence. In other words, this operation preserves the  degree sequence.

In \cite{jaume20202switch} the 2-switch was introduced as a function. 


\begin{definition}\textbf{(}\cite{jaume20202switch}\textbf{)}
	Let \(a,b,c,d \in [n]\) and let $G$ be a graph. The matrix ${{a \ b}\choose{c \ d}}$ is said to be \textbf{interchangeable} in $G$, if it satisfies the following conditions: 
	\begin{enumerate}
		\item $ab,cd\in E(G)$;
		\item $\{a,b\}\cap \{c,d\}=\varnothing$;
		\item $ac,bd\not\in E(G)$.
	\end{enumerate}
	Otherwise, ${{a \ b}\choose{c \ d}}$ is said to be \textbf{trivial} for $G$.	
\end{definition}
Notice in particular that if at least one of $a,b,c,d$ is not a vertex of $G$, then ${{a \ b}\choose{c \ d}}$ is trivial for $G$.

\begin{definition}\textbf{(}\cite{jaume20202switch}\textbf{)}
	\label{def2switch}
	Let \(n\) be a  integer and \(a,b,c,d \in [n]\), $A={{a \ b}\choose{c \ d}}$ and $G$ a graph. A \textbf{2-switch} is a function $\tau_{A}: \mathcal{G}\rightarrow \mathcal{G}$, where \(\mathcal{G}\) is the set of all graphs, defined as follows:
	\begin{equation}
		\tau_{A}(G)= \left\{ 
		\begin{array}{ll}
			G-ab-cd+ac+bd, &   \text{ if } A \text{ is interchangeable in } G, \\
			\\ G, & \text{ if } A \text{ is trivial for } G. \\
		\end{array}
		\right.
	\end{equation}
	
	If $\tau_{A}(G)=G$, we say that $\tau_{A}$ is \textbf{trivial} for $G$. The matrix $A$ is said to be an \textbf{action matrix} of $\tau_{A}$. 
\end{definition}

An important fact about degree sequences is that, given two graphs with the same degree sequence, one can be obtained from the other by applying successive 2-switches.

\begin{theorem}\label{BergeTeo}
    If $G,H\in \mathcal{G}(s)$, there exists a 2-switch sequence transforming $G$ into $H$. 
\end{theorem}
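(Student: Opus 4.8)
The plan is to induct on $n=|G|=|H|$, reducing the statement to one local \emph{exchange lemma}. Two remarks power the reduction. First, a non-trivial $2$-switch is reversible: if $\tau_{A}(G)=G'\ne G$ with $A={{a \ b}\choose{c \ d}}$, then $a,b,c,d$ are distinct and $\tau_{A'}(G')=G$ for $A'={{a \ c}\choose{b \ d}}$, so any $2$-switch sequence can be run backwards. Second, it is therefore enough to show that every member of $\mathcal{G}(s)$ can be carried by $2$-switches to one fixed member $C$ of $\mathcal{G}(s)$, which I take to be the graph produced by the Havel--Hakimi rule (vertex $1$, of maximum degree $d_{1}$, is joined exactly to $2,\dots,d_{1}+1$, and one recurses on the residual degrees); then $G\rightsquigarrow C$ followed by the reverse of $H\rightsquigarrow C$ gives $G\rightsquigarrow H$. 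The inductive step splits as: (i) transform $G$ by $2$-switches into a graph with $N(1)=\{2,\dots,d_{1}+1\}$; and (ii) lift a $2$-switch sequence on $G-1$ to one on $G$ that leaves the edges at vertex $1$ untouched. With (i) carried out for both $G$ and $H$ and the inductive hypothesis applied to $G-1$ and $H-1$ (which carry the same degree function on the remaining $n-1$ vertices), part (ii) assembles the required sequence.

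For (i) the key assertion is: \emph{if $N(1)\ne\{2,\dots,d_{1}+1\}$, some non-trivial $2$-switch strictly increases $\mu(G):=|N(1)\cap\{2,\dots,d_{1}+1\}|$}, so at most $d_{1}$ applications reach the target. Since $|N(1)|=d_{1}=|\{2,\dots,d_{1}+1\}|$ while the sets differ, I may pick $y\in\{2,\dots,d_{1}+1\}\setminus N(1)$ and $z\in N(1)$ with $z\ge d_{1}+2$; then $y<z$, so $d_{y}\ge d_{z}$. I claim there is $w\notin\{1,z\}$ with $w\sim y$ and $w\not\sim z$. If not, then $N(y)\setminus\{z\}\subseteq N(z)$; since $1\in N(z)\setminus N(y)$ this forces $N(y)\subseteq(N(z)\setminus\{1\})\cup\{z\}$, a set of size $d_{z}$, so $d_{y}\le d_{z}$ and hence $d_{y}=d_{z}$ with $N(y)=(N(z)\setminus\{1\})\cup\{z\}$; but then $z\in N(y)$, whence $y\in N(z)\setminus\{1\}\subseteq N(y)$, contradicting the absence of loops (and if instead $z\notin N(y)$ then $N(y)\subsetneq N(z)$ gives $d_{y}<d_{z}$, also impossible). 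Given such a $w$, the matrix $A={{1 \ z}\choose{y \ w}}$ is interchangeable — the new edges $1y$ and $zw$ are absent and $1,z,y,w$ are distinct — and $\tau_{A}(G)$ has vertex $1$ joined to $y\in\{2,\dots,d_{1}+1\}$ in place of $z\notin\{2,\dots,d_{1}+1\}$, so $\mu$ rises by $1$.

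For the lift (ii): when $N_{G}(1)=\{2,\dots,d_{1}+1\}$, the edges of $G$ inside $\{2,\dots,n\}$ are precisely those of $G-1$; hence a $2$-switch whose action matrix uses only vertices of $\{2,\dots,n\}$ and is interchangeable in $G-1$ is also interchangeable in $G$ (a forbidden new edge among $\{2,\dots,n\}$ cannot already be present in $G$) and acts on $G$ exactly as on $G-1$. So the inductive sequence $G-1\rightsquigarrow H-1$ lifts verbatim, carrying $G$ to $H$ since their stars at vertex $1$ already agree; prefixing the normalisation of $G$ and appending the reversed normalisation of $H$ from step~(i) then completes the induction.

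I expect the one genuinely delicate point to be the exchange lemma, and within it the existence of the vertex $w$: the counting is short but hinges on using $1$ to witness $N(z)\not\subseteq N(y)$ against the inequality $d_{y}\ge d_{z}$, together with the simplicity of the graph. The remaining pieces — reversibility, the lift, and the bookkeeping needed so that $G-1$ and $H-1$ are measured against the same Havel--Hakimi target after re-sorting the residual degrees — are routine.
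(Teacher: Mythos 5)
Your argument is correct, but there is nothing in the paper to compare it against: the authors do not prove Theorem \ref{BergeTeo} at all, they simply cite it as a classical result going back to Berge. What you have written is the standard Havel--Hakimi-style proof of that classical theorem, and it holds up. The reversibility observation (that ${{a \ c}\choose{b \ d}}$ is interchangeable in $\tau_{A}(G)$ and undoes $A={{a \ b}\choose{c \ d}}$) is verified correctly; the exchange lemma is the genuinely delicate point, and your counting argument for the witness $w$ is sound --- the case split on whether $z\in N(y)$, using $1\in N(z)\setminus N(y)$ together with $d_{y}\geq d_{z}$ and the absence of loops, closes both branches. The lift in step (ii) works because once $N_{G}(1)=N_{H}(1)=\{2,\dots,d_{1}+1\}$, the edge sets of $G$ and $G-1$ agree on $\{2,\dots,n\}$, so interchangeability transfers in both directions. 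The one piece you wave at --- re-sorting the residual degrees so the induction hypothesis applies to $G-1$ and $H-1$ --- is indeed routine, and is cleanest if you phrase the induction for an arbitrary vertex set with a degree function, selecting a maximum-degree vertex and the $d_{1}$ highest-degree remaining vertices as the target neighbourhood, rather than insisting on the labels $1,\dots,n$ being sorted at every level.
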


Theorem \ref{BergeTeo} appears throughout the literature, although its earliest reference appears  most likely in \cite{Berge}.

In \cite{BockRat} the authors study the matching number of trees with a given degree sequence. The authors find minimum and maximum values 
for the matching number in this family, and then show that every value between the minimum and the maximum
is realized by a graph in the family. When this happens for a parameter, it is said
to have \textbf{interval property} with respect to the family of graphs being studied.

A family of graphs is said to be a \textbf{Berge family}, if for all pair of graphs of the family, there exist a finite sequence of 2-switch that transform one into the other, such that every intermediate graph is a member of the family. In \cite{jaume20202switch} it was proved that trees and forests with the same degree sequence are Berge families, they also introduce two special types of 2-switch.	A nontrivial 2-switch $\tau$ over a tree $T$ is said to be a \textbf{t-switch} if $\tau(T)$ is a tree.  A nontrivial 2-switch $\tau$ over a forest $F$ is said to be an \textbf{f-switch} if $\tau(F)$ is a forest. They also proved the following theorem that shows that trees and forests with the same degree sequence are Berge families.

\begin{theorem}[Forest Transition Theorem, \cite{jaume20202switch}] \label{TTT}
	Let $F$ and $F'$ be two forests (trees) with the same degree sequence. Then \(F\) can be transformed into \(F'\) by a finite sequence of  f-switches (t-switches).
\end{theorem}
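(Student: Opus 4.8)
The plan is to induct on $d(F,F'):=|E(F)\setminus E(F')|$. Since $F$ and $F'$ have the same degree sequence they have the same vertex set $[n]$ and the same size, so $d(F,F')=|E(F')\setminus E(F)|=\tfrac12\,|E(F)\,\triangle\,E(F')|$ and $d(F,F')=0$ forces $F=F'$; this is the base case. For the inductive step it is enough to prove a \emph{Key Lemma}: if $F\neq F'$ are forests with a common degree sequence, then some f-switch $\tau$ has $\tau(F)$ a forest with $d(\tau(F),F')<d(F,F')$. Prepending such a $\tau$ to the sequence granted by the inductive hypothesis applied to $(\tau(F),F')$ finishes the argument. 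I would also record the elementary fact that f-switches are reversible --- if $\tau_{A}(H)=H'$ is an f-switch with action matrix $A={{a\ b}\choose{c\ d}}$ then $\tau_{A'}(H')=H$ is an f-switch with $A'={{a\ c}\choose{b\ d}}$ --- so it suffices to exhibit a $d$-decreasing f-switch on \emph{either} $F$ or $F'$.

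To prove the Key Lemma I would colour the symmetric difference: let $D$ be the graph on $[n]$ with edge set $E(F)\,\triangle\,E(F')$, the edges of $E(F)\setminus E(F')$ red and those of $E(F')\setminus E(F)$ blue. Because $\deg_{F}(v)=\deg_{F'}(v)$ for all $v$, every vertex of $D$ has equal red- and blue-degree; in particular each endpoint of a blue edge is incident with a red edge and vice versa, and $D$ contains an alternating cycle. The first, easy case is the \textbf{separating} one: some disagreement edge joins two vertices lying in different components of the forest it belongs to, say $ab\in E(F')\setminus E(F)$ with $a,b$ in distinct components of $F$ (or the mirror statement for $F'$). Choosing red edges $ax\in E(F)\setminus E(F')$ and $by\in E(F)\setminus E(F')$ --- they exist by the degree count --- we automatically get $x\neq y$ and $xy\notin E(F)$, because $x$ and $y$ sit in different components of $F$. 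The $2$-switch deleting $ax,by$ and inserting $ab,xy$ is interchangeable, its output is again a forest (removing the two bridges $ax,by$ breaks two components into four pieces, and then $ab$ and $xy$ rejoin them in pairs without closing a cycle), and $d$ drops by at least $2$ since two red edges disappear and the blue edge $ab$ appears.

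The crux, and the step I expect to be the real obstacle, is the \textbf{non-separating} case, where every disagreement edge of $F$ has both ends in one component of $F'$ and symmetrically, so one is forced to work inside a single component. I would pick a disagreement edge $ab\in E(F')\setminus E(F)$ whose fundamental $F$-path $P=p_{0}p_{1}\cdots p_{k}$ ($p_{0}=a,\ p_{k}=b,\ k\ge 2$) is shortest over all disagreement edges; minimality rules out any ``$F'$-chord'' $p_{i}p_{j}\in E(F')$ with $j-i\ge 2$, since such a chord would be a disagreement edge with a strictly shorter fundamental path. One then argues that, except in degenerate low-order situations, an endpoint of $P$ --- say $b$ --- has a neighbour $y$ in $F$ off the path; since $F$ is a forest this forces $p_{1}y\notin E(F)$, and the $2$-switch with action matrix ${{a\ p_{1}}\choose{b\ y}}$, which deletes $ap_{1},by$ and inserts $ab,p_{1}y$, keeps the forest property by the same bridge-splitting bookkeeping as before. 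The genuinely delicate point is to guarantee strict progress: deleting $ap_{1}$ is wasteful when $ap_{1}\in E(F')$, a disagreement edge may fail to be creatable in one step when its endpoints have degree one, and so on. Resolving this seems to require a careful secondary analysis --- choosing the endpoint, the off-path neighbour, or a different minimal disagreement edge, possibly interposing an auxiliary f-switch that is neutral for $d$ but shortens a secondary lexicographic potential such as $\bigl(d(F,F'),\ \sum_{e}|P_{e}|\bigr)$ over disagreement edges $e$ --- and I expect essentially all the work of the proof to live here. Once the Key Lemma holds, the induction closes; the statement for trees is the sub-case in which $F$ and $F'$ are connected, in which the separating case cannot occur and every f-switch used is automatically a t-switch (a forest on $|F|$ vertices with $|F|-1$ edges is a tree).
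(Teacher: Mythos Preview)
First, note that the present paper does not prove the Forest Transition Theorem at all: it is quoted from \cite{jaume20202switch} and invoked as a black box (in Lemma~\ref{shared.edge.cycles.1} and Theorem~\ref{pseudoTransition}). There is therefore no in-paper argument to compare your proposal against; what follows is an assessment of your outline on its own merits.

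Your overall plan---induct on $d(F,F')=|E(F)\setminus E(F')|$ and exhibit a $d$-decreasing f-switch---is the standard approach and is sound in principle. In the separating case your argument is essentially correct, with one small correction: the claim that $d$ drops by at least $2$ is too optimistic, since the newly inserted edge $xy$ may well lie outside $E(F')$ and thus create a fresh disagreement. You are only guaranteed a drop of at least $1$, but that still suffices for the induction. (A typographical slip: you write ``the forest it belongs to'' where you mean the forest it does \emph{not} belong to; your subsequent setup makes the intended meaning clear.)

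The non-separating case is, as you candidly acknowledge, where the real work lives, and your sketch does not close it. Two concrete obstructions: the switch $\binom{a\ p_{1}}{b\ y}$ you propose need not decrease $d$ when $ap_{1}\in E(F')$, and minimality of $P$ does \emph{not} rule this out---it only forbids $F'$-chords $p_{i}p_{j}$ with $j-i\ge 2$, not the consecutive edges $p_{i}p_{i+1}$. Moreover, when both endpoints $a,b$ of the chosen disagreement edge are leaves there is no off-path neighbour $y$ at all, so the switch is undefined. Your suggestion of augmenting $d$ by a secondary potential (total fundamental-path length, say) is a reasonable line of attack, but until you specify a concrete switch and verify that the chosen potential strictly decreases in every residual sub-case, the proof is incomplete. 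This is a genuine gap, not a matter of polish; the Key Lemma as stated remains unproved.
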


A graph parameter \(\xi\) is said to be \textbf{stable} under $2$-switch, see definition 5.1 in \cite{jaume20202switch}, if given \(G\) a graph and \(\tau\) a $2$-switch, then
\[
\left| \xi\left(\tau(G)\right)-\xi(G)\right| \leq 1.
\]
\begin{theorem}[\cite{jaume20202switch}]
	The following parameters are stable under 2-switch:
	\begin{enumerate}
		\item matching number, 
		\item independence number,
		\item domination number,
		\item path-covering number,
		\item edge-covering number,
		\item vertex-covering number,
		\item chromatic number,
		\item clique number,
		\item number of connected components.
	\end{enumerate}
\end{theorem}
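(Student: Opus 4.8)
The plan is to establish $|\xi(\tau(G))-\xi(G)|\le 1$ for each of the nine parameters separately, after two preliminary remarks about a single $2$-switch. First, if $A={{a \ b}\choose{c \ d}}$ is interchangeable in $G$ and $G':=\tau_A(G)=G-ab-cd+ac+bd$, then ${{a \ c}\choose{b \ d}}$ is interchangeable in $G'$ and its $2$-switch sends $G'$ back to $G$; hence every $2$-switch is inverted by a $2$-switch, so for each parameter it suffices to prove one of the two inequalities, the other following by applying the same argument to the inverse switch. Second, $E(G-ab-cd)\subseteq E(G')$, the ``new'' edges $ac,bd$ belong to $E(G')$ and the ``old'' edges $ab,cd$ to $E(G)\setminus E(G')$; and since $ab,cd\in E(G)$ while $ac,bd\notin E(G)$, no stable set of $G$ contains $\{a,b\}$ or $\{c,d\}$ and no clique of $G$ contains $\{a,c\}$ or $\{b,d\}$. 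These facts are exactly what keeps the local repair of an optimal structure to a cost of one.

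For the \emph{matching number}, a maximum matching $M$ of $G$ uses $0$, $1$, or $2$ of the edges $ab,cd$; if none, $M$ survives in $G'$; if both, $M-ab-cd+ac+bd$ is a matching of $G'$ of the same size; if exactly one, deleting it costs one. For the \emph{independence number}, a maximum stable set $S$ of $G$ can fail to be stable in $G'$ only by containing $\{a,c\}$ or $\{b,d\}$; by the second remark it then misses $\{b,d\}$, respectively $\{a,c\}$, so deleting one vertex of the offending pair from $S$ repairs it. For the \emph{clique number}, dually, a maximum clique $K$ of $G$ can fail to be a clique in $G'$ only by containing $\{a,b\}$ or $\{c,d\}$, in which case it misses $\{c,d\}$, respectively $\{a,b\}$, and deleting one vertex of the offending pair repairs it. The \emph{vertex-covering number} and \emph{edge-covering number} then follow for free: the first equals $n$ minus the independence number (the complement of a minimum vertex cover is a maximum stable set), and by Gallai's theorem, on a graph without isolated vertices the second equals $n$ minus the matching number; a $2$-switch preserves the degree sequence, hence $n$ and the set of isolated vertices.

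For the \emph{chromatic number}, take a proper $\chi(G)$-colouring of $G$: the only edges of $G'$ on which it can fail are $ac$ and $bd$, and recolouring whichever of $c,d$ is offending with a single fresh colour repairs them, which is legitimate because $cd\notin E(G')$ lets $c$ and $d$ share a colour; hence $\chi(G')\le\chi(G)+1$. For the \emph{domination number}, take a minimum dominating set $D$ of $G$; only vertices in $\{a,b,c,d\}$ can lose domination in $G'$, and a short check on which of them lie in $D$ shows that the set of newly undominated vertices is contained in $\{a,c\}$ or in $\{b,d\}$ --- a pair joined by one of the new edges --- so adding one endpoint of that pair to $D$ restores domination at a cost of one. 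For the \emph{number of connected components}, the naive splitting of $\tau$ into four single-edge changes only yields a bound of $2$, so instead I pass through $G^{+}:=G+ac+bd$, noting $G'=G^{+}-ab-cd$. Going from $G$ to $G^{+}$ changes $\kappa$ by $0$ or $-1$ (one new edge can merge the components of $\{a,b\}$ and of $\{c,d\}$, after which the other new edge is internal to a component); and because $G^{+}$ contains a $4$-cycle on $a,b,c,d$, deleting $ab$ from $G^{+}$ leaves $\kappa$ unchanged (the route from $a$ to $b$ through $c$ and $d$ survives), and deleting $cd$ afterwards changes $\kappa$ by $0$ or $+1$. Altogether $\kappa(G')\in\{\kappa(G)-1,\kappa(G),\kappa(G)+1\}$.

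The delicate case is the \emph{path-covering number} $\pi$. From a minimum path cover $\mathcal{P}$ of $G$ (so $\pi(G)=|\mathcal{P}|$) I delete from its paths whichever of $ab,cd$ they actually use, obtaining at most $\pi(G)+2$ vertex-disjoint paths covering $V(G)$ with edges in $G-ab-cd\subseteq G'$; if at most one of $ab,cd$ was used this is already a path cover of $G'$ with at most $\pi(G)+1$ paths. Otherwise both were used, and then $a$ and $b$ are endpoints of two distinct broken pieces, as are $c$ and $d$. The crux is that the piece carrying $a$ and the piece carrying $c$ can coincide only if that common piece has endpoint set exactly $\{a,c\}$, which --- by examining the positions and the relative orientation of $ab$ and $cd$ along the path of $\mathcal{P}$ that contains both --- forces the pieces carrying $b$ and $d$ to be distinct. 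So either $a$ and $c$ lie on distinct pieces, which I then join using $ac\in E(G')$, or $b$ and $d$ do, which I join using $bd\in E(G')$; that single gluing brings the count down to at most $\pi(G)+1$, so $\pi(G')\le\pi(G)+1$. I expect this last bookkeeping --- tracking which broken pieces share an endpoint, so that a gluing is never forced to close a cycle --- to be the main obstacle; everything else is a routine finite case analysis.
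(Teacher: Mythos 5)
The paper does not prove this theorem at all: it is imported verbatim from the cited reference \cite{jaume20202switch} and used as a black box, so there is no in-paper argument to compare yours against. Your proposal is a correct, self-contained proof. The two structural devices you set up at the start do real work: the observation that every $2$-switch is inverted by the $2$-switch ${{a\ c}\choose{b\ d}}$ legitimately halves the labour to a single inequality $\xi(\tau(G))\le\xi(G)+1$, and the incidence facts about which pairs among $a,b,c,d$ can coexist in a stable set or clique are exactly what makes each one-vertex (or one-colour, or one-edge) repair succeed. I checked the two places most likely to hide an error. For the domination number, the pairs $\{a,d\}$ and $\{b,c\}$ of newly undominated vertices are indeed impossible (if $a$ lost its only dominator $b$ and $d$ lost its only dominator $c$, then $c\in D$ dominates $a$ in $\tau(G)$ via the new edge $ac$), which is the content of your claim that the damage is confined to $\{a,c\}$ or $\{b,d\}$. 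For the path-covering number, the only genuinely delicate case is when $ab$ and $cd$ lie on the same path of the minimum cover; there the middle fragment has endpoint set one of $\{a,c\}$, $\{a,d\}$, $\{b,c\}$, $\{b,d\}$, and in each case exactly one of the new edges $ac$, $bd$ joins two \emph{distinct} fragments at free endpoints, so the gluing never closes a cycle. Your write-up leaves that four-way check as ``examining the positions and the relative orientation''; it does close, but it is the one step I would ask you to write out explicitly, since it is the only point where the bound $\pi(G)+2$ must genuinely be improved to $\pi(G)+1$ rather than obtained for free.
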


Note that any stable discrete graph parameter has the interval property with respect to every Berge family.

%
%

\section{u-switch}

In this section we introduce and characterize a particular type of 2-switch, the u-switch, which is closed over unicyclic graphs. We start with some technical results.

A graph $G$ is said to be \textbf{unicyclic} if it is connected and contains exactly one cycle, i.e. \(\left\| G\right\|=|G|\). If $s$ is a graphical sequence, we denote by $\mathcal{U}(s)$ the set of all unicyclic graphs with degree sequence $s$. We assume that \(s\) is the degree sequence of at least one unicyclic graph.

A 2-switch \(\tau\) is said to be a \textbf{breaker} over a graph \(G\) whenever \(\kappa(\tau(G))>\kappa(G)\). If \(C\) is a cycle graph, and \(\tau\) is a breaker over \(C\), then \(\tau(C)\) is the union of two disjoint cycles. If \(\tau\) is not a breaker over \(C\), then \(\tau(C)\) is a cycle isomorphic to \(C\).

The following observation will be used many times. Let $\tau= {{a \ b}\choose{c \ d}}$ be a 2-switch and $e$ an edge of $G$, \(e \notin \{ab,\,cd\}\). If $e\notin\tau(G-e)$, then $\tau(G-e)+e=\tau(G)$. In particular, if $\tau(G-e)$ is a tree, then $\tau(G)$ is unicyclic.

We define two edge-disjoint subgraphs of a graph \(G\): \(\Cycles(G)\) and \(\Forest(G)\). By \(\Cycles(G)\) we denote the subgraph induced by all
the vertices of \(G\) such that they are in a cycle of \(G\). By \(\Forest(G)\) we denote the forest
that remains after taking away from \(G\) all the edges in \(\Cycles(G)\), i.e. \(\Forest(G) := G - E(\Cycles(G))\), i.e.
this subgraph is the forest whose components are trees (possibly trivial) attached to vertices of \(\Cycles(G)\). Note that if \(G\) is just a set of cycles, then \(\Forest(G)\) is a set of isolated vertices. Clearly  $V(\Cycles(G))\subseteq V(G)=V(\Forest(G))$. Observe that the edge set $E(G)$ can always be written as the disjoint union of $E(\Cycles(G))$ and $E(\Forest(G))$.

\begin{definition}
    A nontrivial 2-switch $\tau$ over a unicyclic graph $U$ is said to be a \textbf{u-switch} if $\tau(U)$ is unicyclic.
\end{definition}

\begin{lemma}
    \label{lem.tswitch.U-e}
    
    Let $U\in \mathcal{U}(s)$ and let $e$ be any edge of $\Cycles(U)$. If $\tau$ is a t-switch in $U-e$ between the edges $ab,cd$ of $\Forest(U)$, then $e\notin\tau(U-e)$.
\end{lemma}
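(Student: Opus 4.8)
The plan is to argue by contradiction. Put $T:=U-e$. Since $e$ lies on the unique cycle $C$ of $U$, deleting it leaves a connected graph with $|U|-1$ edges, so $T$ is a spanning tree of $U$ (which is what makes ``$t$-switch in $U-e$'' meaningful), and by definition of a t-switch $\tau(T)$ is again a tree. Writing the action matrix as $\binom{a\ b}{c\ d}$, the graph $\tau(T)$ arises from $T$ by deleting $ab,cd$ and inserting $ac,bd$ (the other admissible labelling only renames $c$ and $d$). Because $e\in E(\Cycles(U))$ while $ab,cd\in E(\Forest(U))$, and these two edge sets are disjoint, we get $e\neq ab$ and $e\neq cd$; together with $e\notin E(T)$ this gives the equivalence $e\in E(\tau(T))\iff e\in\{ac,bd\}$. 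So it suffices to rule out $e=ac$ and $e=bd$.

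Assume $e=ac$. Then $a$ and $c$ are both vertices of $C$, so $P:=C-e$ is a path in $T$ joining $a$ to $c$, and every edge of $P$ lies in $E(C)=E(\Cycles(U))$ and is in particular different from the forest edges $ab$ and $cd$. Hence $E(P)\subseteq E(T)\setminus\{ab,cd\}\subseteq E(\tau(T))$; since moreover $ac\in E(\tau(T))$ and $ac=e\notin E(P)$, the path $P$ together with the edge $ac$ is a cycle contained in $\tau(T)$, contradicting that $\tau(T)$ is a tree. The case $e=bd$ is handled the same way: now $b$ and $d$ are vertices of $C$, the path $P:=C-e$ joins $b$ to $d$ inside $T$ and avoids $ab$ and $cd$, and the switched-in edge $bd\in E(\tau(T))$ closes $P$ into a cycle in $\tau(T)$, which is again impossible. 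Therefore $e\notin\{ac,bd\}$, and consequently $e\notin E(\tau(U-e))$.

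The whole proof rests on the one-line remark that removing two \emph{forest} edges from $T$ cannot affect the path $P$ obtained from the broken cycle, so reinserting $e$ as one of the switched-in edges recreates a cycle; I do not expect any real obstacle. What has to be verified is only the routine bookkeeping used above: that $U-e$ is a tree, that the ends of the cycle edge $e$ are cycle vertices so that $C-e$ genuinely is an $a$--$c$ (resp.\ $b$--$d$) path living in $T$, and the disjointness $E(\Cycles(U))\cap E(\Forest(U))=\varnothing$. This lemma is exactly what is needed to apply the earlier observation and conclude that $\tau(U)=\tau(U-e)+e$ is unicyclic, i.e.\ that $\tau$ is a u-switch.
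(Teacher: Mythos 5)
Your proof is correct and uses essentially the same idea as the paper: the path $C-e$ survives the switch because $\tau$ only touches forest edges, so the presence of $e$ in $\tau(U-e)$ would recreate the cycle $C$ and contradict that $\tau(U-e)$ is a tree. Your version merely makes explicit the case split $e\in\{ac,bd\}$ that the paper leaves implicit.
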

\begin{proof}
	Let $C$ be the cycle formed by the edges in $\Cycles(U)$.
	Notice that $C-e$ is a subgraph of both $U-e$ and $\tau(U-e)$, because
	none of the edges in $C-e$ are involved in $\tau$.
	If $e\in \tau(U-e)$, then $\tau(U-e)$ contains the cycle $C$, contradicting
	the fact that $\tau$ is a t-switch in $U-e$. Hence, $e\not\in \tau(U-e)$.
\end{proof}

\begin{lemma}
    \label{lem2.tswitch.U-e}
    
    Let $U\in \mathcal{U}(s)$ and consider the following edges: $ab\in \Cycles(U),cd\in \Forest(U)$. If $\tau$ is a 2-switch between $ab$ and $cd$, then there exists an edge $e\in \Cycles(U)-ab$ such that $e\notin\tau(U-e)$. 
\end{lemma}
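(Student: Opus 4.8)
The plan is to analyze the 2-switch $\tau = {{a\ b}\choose{c\ d}}$ where $ab$ lies on the unique cycle $C$ of $U$ and $cd$ lies in $\Forest(U)$, and to show that performing $\tau$ on $U$ necessarily destroys at least one of the remaining cycle edges, in the sense that there is an edge $e \in E(C) \setminus \{ab\}$ with $e \notin \tau(U-e)$. The natural first observation is that $\tau$ removes the cycle edge $ab$ from $U$; since $ab$ is the only cycle edge touched by $\tau$, the graph $\tau(U)$ still contains the path $C - ab$, but $C$ itself is broken unless one of the newly added edges $ac$ or $bd$ happens to coincide with $ab$ — which it cannot, since $\{a,b\}\cap\{c,d\}=\varnothing$. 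So $\tau(U)$ does not contain the cycle $C$.

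First I would fix notation: let $C$ be the cycle of $\Cycles(U)$, write its edges as $ab = e_0, e_1, \dots, e_{k-1}$, and note that $\tau$ affects only $e_0$ among these. The key point is that $\tau(U)$ and $U$ have the same degree sequence and $\tau(U)$ is obtained from $U$ by a genuine (nontrivial) 2-switch — or, if $\tau$ is trivial for $U$, then $\tau(U-e) = U - e$ for every $e$ and we may pick any $e \in E(C)\setminus\{ab\}$, since that edge is certainly absent from $U - e$. So assume $\tau$ is interchangeable in $U$. Then $\tau(U) = U - ab - cd + ac + bd$. I claim $\tau(U)$ is again a pseudoforest-like object with at most one cycle; more precisely, since $U$ has exactly $|U|$ edges and $\tau$ preserves the edge count, $\tau(U)$ is either unicyclic (if connected) or a pseudoforest. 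In either case $\tau(U)$ contains at most one cycle, and that cycle — if it exists — is not $C$, because $ab \notin \tau(U)$ and the path $C - ab$ together with the two new edges $ac, bd$ cannot recreate $C$.

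The heart of the argument is then a counting/reachability step: I want an edge $e$ on $C$ other than $ab$ that does not survive in $\tau(U - e)$. Consider $\tau(U)$. If $\tau(U)$ still contains all of $e_1, \dots, e_{k-1}$, then since it also contains the two endpoints' new connections $ac$ and $bd$, the path $e_1 \cdots e_{k-1}$ (which is a path from $a$ to $b$ in $\tau(U)$) closes up into a cycle with any $a$–$b$ path through the rest of $\tau(U)$; in particular, adding back $ab$ to $\tau(U)$ — i.e. looking at $\tau(U) + ab$ — would create a second cycle, contradicting that $\tau(U)$ has at most one cycle unless $\tau(U)$ is a forest. So either $\tau(U)$ is a forest, in which case for $e = e_1$ (say) we have $e_1 \notin \tau(U-e_1)$ would need separate justification — here I would instead argue directly: if every $e_i$ ($i\geq 1$) satisfied $e_i \in \tau(U - e_i)$, then by the observation quoted before Lemma~\ref{lem.tswitch.U-e}, $\tau(U - e_i) + e_i = \tau(U)$, so $\tau(U)$ contains $e_i$ for all $i \geq 1$; combined with $ac, bd \in \tau(U)$ this forces $\tau(U)$ to contain the cycle on vertex set $V(C)$ using edges $e_1, \dots, e_{k-1}$ plus a connection between $a$ and $b$ — but that connection is exactly what $ab$ provided and $ab \notin \tau(U)$, so no such cycle exists, a contradiction. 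Wait — more carefully, $e_1, \dots, e_{k-1}$ form a path from $a$ to $b$, not a cycle, so containing all of them is not yet contradictory; the contradiction comes from degree counting at the vertices of $C$.

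The step I expect to be the main obstacle is making the last contradiction airtight: showing that $\tau(U)$ cannot simultaneously contain all the edges $e_1, \dots, e_{k-1}$ together with the fresh edges $ac$ and $bd$. The clean way is a parity/degree-sum argument restricted to $V(C)$: in $U$, every vertex of $C$ has exactly two incident edges inside $\Cycles(U)$; after $\tau$, vertex $a$ loses its incident cycle-edge $ab$ and gains $ac$ (whose other endpoint $c$ is in $\Forest(U)$, hence $ac \notin E(C)$), so the number of $C$-edges at $a$ drops by one, and similarly at $b$. If all of $e_1,\dots,e_{k-1}$ survived in $\tau(U)$, then the subgraph of $\tau(U)$ induced on $V(C)$ would contain a path from $a$ to $b$ of length $k-1$ with all internal vertices of degree $2$ and $a, b$ of degree $1$ within that path — which is perfectly consistent, so I must instead track where the "extra" edge went. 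The resolution: $\tau(U)$ has exactly $\|U\| = |U|$ edges and is obtained from $U$ by swapping; the cycle of $U$ (if $\tau(U)$ is unicyclic) must use the edges $ac$ and/or $bd$, and a short case analysis on whether $c$ and $d$ lie on a common branch of $\Forest(U)$ pins down which edge $e \in E(C)\setminus\{ab\}$ gets absorbed. I would organize this as: (i) if $\tau(U)$ is disconnected or a forest, exhibit $e$ directly; (ii) if $\tau(U)$ is unicyclic with cycle $C'$, then $C' \neq C$, so $C'$ omits some $e \in E(C)$; since $ab \notin \tau(U) \supseteq \tau(U - e) \setminus \{e\}$ already, and $C'$ being the unique cycle of $\tau(U)$ forces $e \notin \tau(U)$ hence $e \notin \tau(U-e)$, we are done — modulo checking $e \neq ab$, which holds because $ab$ was removed by $\tau$ and cannot be on any cycle of $\tau(U)$.
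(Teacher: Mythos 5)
Your proposal does not close. The condition you must verify, $e\notin\tau(U-e)$, is far more elementary than your argument suggests: in $\tau(U-e)$ the edge $e$ has been deleted \emph{before} the switch is applied, and the only edges the switch adds are $ac$ and $bd$; hence $e\in\tau(U-e)$ if and only if $e\in\{ac,bd\}$. The lemma therefore reduces to exhibiting an edge of $\Cycles(U)-ab$ that is neither $ac$ nor $bd$, which is exactly what the paper does in two lines: since $cd\in E(\Forest(U))$, at least one of $c,d$ --- say $d$ --- is not a vertex of $\Cycles(U)$; taking $e=bv$ with $v$ the neighbor of $b$ on the cycle other than $a$, one gets $e\neq bd$ (because $v\in V(\Cycles(U))$ while $d\notin V(\Cycles(U))$) and $e\neq ac$ (because $b\notin\{a,c\}$), so $e\notin\tau(U-e)$. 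Your route through the global structure of $\tau(U)$ never engages with this point.

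Moreover, the structural argument you do give contains two incorrect steps. First, the claim that ``$\tau(U)$ is either unicyclic or a pseudoforest, and in either case contains at most one cycle'' does not follow from edge counting: a graph on $n$ vertices with $n$ edges can have a component with two independent cycles, and a pseudoforest may contain one cycle \emph{per component}. (Under the present hypotheses $\tau(U)$ does turn out to be unicyclic, but that needs the observation that the forest edge $cd$ does not separate $a$ from $b$ in the spanning tree $U-ab$, which you never make.) Second, and fatally, your concluding case (ii) infers from ``$C'$ omits some $e\in E(C)$'' that ``$e\notin\tau(U)$''. This is false: every edge of $C-ab$ survives in $\tau(U)$, since the switch deletes only $ab$ and $cd$; an edge of $C$ not lying on the new cycle $C'$ is simply a tree edge of the unicyclic graph $\tau(U)$, not an absent edge. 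It can also happen that $C'$ contains all of $e_1,\dots,e_{k-1}$, in which case your case (ii) produces no candidate $e\neq ab$ at all. The lemma is not about which edges survive in $\tau(U)$ --- all of them do except $ab$ and $cd$ --- but about choosing a deleted edge that is not re-created as $ac$ or $bd$.
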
   

\begin{proof}
	Let $\tau={{a \ b}\choose{c \ d}}$.
	As $cd\in E(\Forest(U))$, at least one of $c$ and $d$ is not a vertex of $Cycles(U)$.
	Assume without loss of generality that $d\not\in V(Cycles(U))$.
	Let $v\neq a$ be a neighbor of $b$ in $\Cycles(U)$, and let $e=bv$.
	Notice that $v\neq d$ because $d\not\in V(\Cycles(U))$. Therefore, $e\not\in \tau(U-e)=U-e-ab-cd+ac+bd$.
%
%
\end{proof}
The following theorem characterizes u-switches. 
\begin{theorem}
	\label{uswitchcaract}
	
	Let $\tau$ be a nontrivial 2-switch between two disjoint edges $e_{1}$ and $e_{2}$ of a unicyclic graph $U\in \mathcal{U}(s)$. Then the following statements hold: 
	\begin{enumerate}
		\item If $e_{1},e_{2}\in E(\Forest(U))$, $e\in E(\Cycles(U))$, then \(\tau\) is a u-switch over $U$ if and only if $\tau$ is a t-switch over $U-e$.
		\item If $e_{1}\in E(\Cycles(U))$ and $e_{2}\in E(\Forest(U))$, then \(\tau\) is a u-switch over $U$.
		\item If $e_{1},e_{2}\in E(\Cycles(U))$, then \(\tau\) is a u-switch over $U$ if and only if $\tau$ is not a breaker over $\Cycles(U)$.		
	\end{enumerate}
\end{theorem}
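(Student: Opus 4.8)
The plan is to prove the three statements separately, since they correspond to the three qualitatively different ways a 2-switch can interact with the unique cycle $C = \Cycles(U)$. In all three cases the key device is the "deletion/restoration" observation stated just before the definition of u-switch: if $e \notin \{e_1, e_2\}$ and $e \notin \tau(U-e)$, then $\tau(U-e)+e = \tau(U)$, and moreover if $\tau(U-e)$ is a tree then $\tau(U)$ is unicyclic. The strategy is therefore to locate, in each case, the right edge $e$ of the cycle to delete so that this observation applies, and then to translate "u-switch" into "t-switch on the forest-like graph $U-e$" using Theorem~\ref{TTT} and the two preceding lemmas.

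For statement (1), suppose $e_1, e_2 \in E(\Forest(U))$ and fix any $e \in E(\Cycles(U))$. For the "if" direction: if $\tau$ is a t-switch over $U-e$, then Lemma~\ref{lem.tswitch.U-e} gives $e \notin \tau(U-e)$, so by the observation $\tau(U) = \tau(U-e)+e$; since $\tau(U-e)$ is a tree and $e$ joins two vertices of it (both endpoints of $e$ lie on $C$, hence in $U-e$), adding $e$ creates exactly one cycle, so $\tau(U)$ is unicyclic and $\tau$ is a u-switch. For the "only if" direction: if $\tau$ is a u-switch, then $\tau(U)$ is unicyclic with its unique cycle still equal to $C$ (the edges of $C$ are untouched by $\tau$ because $e_1, e_2 \in E(\Forest(U))$ are disjoint from $E(\Cycles(U))$); removing the cycle edge $e$ from a unicyclic graph gives a tree, and since $\tau$ commutes with deleting $e$ (again $e \notin \{e_1,e_2\}$ and $e \notin \tau(U)$ since $\tau(U)-e$ has no cycle), we get $\tau(U-e) = \tau(U)-e$, a tree; nontriviality of $\tau$ on $U-e$ follows from nontriviality on $U$ together with the fact that $e \notin \{ac, bd\}$. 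Hence $\tau$ is a t-switch over $U-e$.

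For statement (2), with $e_1 \in E(\Cycles(U))$ and $e_2 \in E(\Forest(U))$, apply Lemma~\ref{lem2.tswitch.U-e} to obtain an edge $e \in E(\Cycles(U)) - e_1$ with $e \notin \tau(U-e)$. Now $U-e$ is a tree (we removed one cycle edge from a unicyclic graph), and $\tau$ is a 2-switch on this tree that deletes the cycle edge $e_1$ and the forest edge $e_2$; a 2-switch on a tree either keeps it a tree or disconnects it, and in fact one checks that here $\tau(U-e)$ stays connected — the endpoints of $e_1$ remain linked through the rest of $C-e$ — so $\tau(U-e)$ is a tree. By the observation, $\tau(U) = \tau(U-e)+e$, and adding the edge $e$ (whose endpoints lie on $C$, hence in the tree $\tau(U-e)$) creates exactly one cycle; thus $\tau(U)$ is unicyclic and $\tau$ is always a u-switch. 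For statement (3), with $e_1, e_2 \in E(\Cycles(U))$, the 2-switch acts entirely within the cycle $C$, so $\tau(U)$ is obtained from $U$ by replacing $C$ with $\tau(C)$ while leaving $\Forest(U)$ and its attachments intact; hence $\tau(U)$ is unicyclic precisely when $\tau(C)$ is a single cycle, i.e. precisely when $\tau$ is not a breaker over $\Cycles(U)$, using the remark that a breaker turns a cycle into two disjoint cycles and a non-breaker returns a cycle.

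The main obstacle I expect is the bookkeeping around \emph{nontriviality} and around verifying connectivity in case (2): one must make sure that the new edges $ac$ and $bd$ introduced by $\tau$ are not already present, and that none of them coincides with the deleted cycle edge $e$, so that the identity $\tau(U-e)+e = \tau(U)$ and the reverse identity $\tau(U)-e = \tau(U-e)$ both genuinely hold and transfer nontriviality in both directions. These are exactly the hypotheses packaged into "interchangeable," so the work is in carefully checking that $e$ (chosen via Lemmas~\ref{lem.tswitch.U-e} and~\ref{lem2.tswitch.U-e}) does not clash with the four vertices $a,b,c,d$; the lemmas are phrased precisely to supply such an $e$, so this should go through, but it is the step most prone to a slip.
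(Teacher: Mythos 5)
Your treatment of cases (1) and (3) is sound and essentially matches the paper's proof (for the ``only if'' direction of (1) you argue directly that $\tau(U-e)=\tau(U)-e$ is a tree, whereas the paper argues by contrapositive that a non-t-switch forces $\kappa(\tau(U))=2$; both work, since $C-e$ keeps all vertices of the cycle in one component of $\tau(U-e)$). However, your case (2) contains a genuine gap: the claim that ``$\tau(U-e)$ stays connected --- the endpoints of $e_{1}$ remain linked through the rest of $C-e$'' is false. The edge $e$ supplied by Lemma \ref{lem2.tswitch.U-e} is $e=bv$ with $v$ a neighbor of $b$ on the cycle, so $U-e-e_{1}-e_{2}=U-bv-ab-cd$ deletes \emph{two} cycle edges both incident to $b$; the endpoints $a$ and $b$ of $e_{1}$ are then in general in different components, and adding $ac$ and $bd$ need not reconnect them. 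Concretely, take $C$ the triangle $a\,b\,v$ with the pendant path $v\,c\,d$: here $\tau={{a\ b}\choose{c\ d}}$ is interchangeable, $e=bv$, and $\tau(U-e)$ has edge set $\{va,vc,ac,bd\}$, which is the disjoint union of the triangle $a\,v\,c$ and the edge $bd$ --- not a tree. Your final step (``adding $e$ to the tree $\tau(U-e)$ creates exactly one cycle'') therefore does not apply.

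The conclusion of (2) is still true, and the paper's proof shows how to repair your argument: since $\tau$ applied to the tree $U-e$ preserves order and size and can create at most one cycle, $\tau(U-e)$ is either a tree or the disjoint union of a unicyclic graph $U'$ and a tree $T$. In the first case $\tau(U-e)+e=\tau(U)$ is unicyclic as you say; in the second case one checks that $e=bv$ has one endpoint in $U'$ and one in $T$ (in the example, $v\in U'$ and $b\in T$), so adding $e$ merges the two components without creating a second cycle, and $\tau(U)$ is again unicyclic. You need this extra case analysis; the single-case argument as written would fail on the example above.
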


\begin{proof} 
	(i \(\Leftarrow\) ) By Lemma \ref{lem.tswitch.U-e} \(e \notin \tau(U-e)\). Since \(\tau(U-e)\) is a tree, \(\tau(U-e)+e=\tau(U)\) is a unicyclic graph.
	 
	(i \(\Rightarrow\) )  If $e_{1},e_{2}\in E(\Forest(U))$, $e\in E(\Cycles(U))$, and $\tau$  is not a t-switch in $U-e$, then \(\kappa(\tau(U))=2\).
	
	(ii \(\Rightarrow\) ) By Lemma \ref{lem2.tswitch.U-e}  there exists \(e \in \Cycles(U)-e_{1}\) such that \(e\notin\tau(U-e)\). Clearly, $U-e$ is a tree. If $\tau(U-e)$ is a tree, then \(\tau(U-e)+e=\tau(U)\) is a unicyclic graph. Otherwise, $\tau(U-e)$ is the union of a unicyclic graph $U'$ and a tree $T$. Since $e$ links $U'$ to $T$, we have that $\tau(U-e)+e=\tau(U)$ is a unicyclic graph.
	
	
	(iii \(\Rightarrow\) ) If $\tau$ is nontrivial but a breaker over $\Cycles(U)$, then $\tau(U)$ has two components, Therefore $\tau(U)\notin \mathcal{U}(s)$. 
	
	(iii \(\Leftarrow\) ) If $\tau$ is not a breaker over $\Cycles(U)$, then $\tau(U)$ is a unicyclic graph, because \(\tau(C)\) is a cycle isomorphic to \(C\).
\end{proof}

%
%
\section{Unicyclic graphs with a given degree sequence are a Berge family}

In order to prove that the families of unicyclic graphs with a given degree sequence are Berge families, see Theorem \ref{unicTransition}, we need some technical lemmas.

\begin{lemma}
	\label{sharedleaf.lem1} \label{sharedleaf.lem2}
	
	Let $U$ be a unicyclic graph such that $\ell u\in E(U)$, $d_{\ell}=1$ and $u\notin V(\Cycles(U))$. 
	\begin{enumerate}
		\item If $v\in V(\Cycles(U))$, then there exists an u-switch $\tau$ over $U$ such that $\ell v\in E(\tau(U))$.    
		\item If $v\notin V(\Cycles(U))$, $v\neq u$, and $d_{v}\geq 2$, then there exist a u-switch $\tau$ over $U$ such that $\ell v\in E(\tau(U))$.
	\end{enumerate}
	
\end{lemma}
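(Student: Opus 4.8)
The plan is to prove both parts at once by exhibiting, for a well-chosen neighbour \(w\) of \(v\), the explicit 2-switch \(\tau={{\ell \ u}\choose{v \ w}}\). Applying \(\tau\) deletes \(\ell u\) and \(vw\) and creates \(\ell v\) and \(uw\); since \(\ell v\) is an added edge distinct from the two deleted ones, \(\ell v\in E(\tau(U))\) as wanted. So the work is to choose \(w\) so that \(\tau\) is interchangeable in \(U\) — which makes \(\tau\) nontrivial — and then to check that \(\tau(U)\) is unicyclic, i.e. that \(\tau\) is a u-switch; the latter will come from Theorem \ref{uswitchcaract}.

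For interchangeability, note that \(\ell u\in E(\Forest(U))\) because the leaf \(\ell\) is not on the cycle, and that \(\{\ell,u\}\cap\{v,w\}=\varnothing\) together with \(\ell v\notin E(U)\) follow readily from \(d_{\ell}=1\), from \(u\notin V(\Cycles(U))\), and from the fact that \(v\) (and \(w\)) have degree at least \(2\) in the relevant case. The only real issue is to produce a neighbour \(w\) of \(v\) with \(uw\notin E(U)\). In part~(i), \(v\) has exactly two neighbours \(w_{1},w_{2}\) on the cycle; if \(u\) were adjacent to both, then \(U\) would contain, besides \(\Cycles(U)\), the cycle formed by \(uw_{1}\), one of the two arcs of \(\Cycles(U)\) between \(w_{1}\) and \(w_{2}\), and \(w_{2}u\), and this second cycle passes through \(u\notin V(\Cycles(U))\), contradicting unicyclicity; so one of \(w_{1},w_{2}\), taken as \(w\), works, and moreover \(vw\in E(\Cycles(U))\). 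In part~(ii), fix any \(e\in E(\Cycles(U))\) and set \(T:=U-e\), a tree; since \(v\notin V(\Cycles(U))\), \(v\) has the same neighbours in \(T\) as in \(U\). Let \(v^{\ast}\) be the neighbour of \(v\) on the \(v\)-\(u\) path in \(T\), and let \(w\) be any other neighbour of \(v\) (one exists since \(d_{v}\geq2\)). Removing \(v\) from \(T\) puts \(u\) in the component of \(v^{\ast}\) and \(w\) in a different component, so there is no edge \(uw\) in \(T\), hence none in \(U\) (it would be a forest edge, distinct from \(e\)); this also gives \(u\neq w\) and \(vw\in E(\Forest(U))\).

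Now I would finish with Theorem \ref{uswitchcaract}. In part~(i), \(vw\in E(\Cycles(U))\) and \(\ell u\in E(\Forest(U))\), so part~(ii) of that theorem says at once that \(\tau\) is a u-switch. In part~(ii), \(\ell u,vw\in E(\Forest(U))\) and \(e\in E(\Cycles(U))\), so part~(i) of the theorem reduces the claim to showing that \(\tau\) is a t-switch over \(T=U-e\), i.e. that \(\tau(T)\) is a tree. Here a component count does it: \(T-\ell u-vw\) has three components, the singleton \(\{\ell\}\), the component \(A\) of \(v\) and the component \(B\) of \(w\); the choice \(w\neq v^{\ast}\) forces \(u\in A\). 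Then adding \(\ell v\) merges \(\{\ell\}\) into \(A\) and adding \(uw\) merges \(A\) with \(B\), so \(\tau(T)\) is connected with \(|V(T)|-1\) edges, hence a tree.

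The step I expect to be the main obstacle is the component analysis in part~(ii): one must be sure that after deleting \(\ell u\) and \(vw\) from \(T\) the vertex \(u\) lies on the \(v\)-side and not the \(w\)-side, since this is exactly what makes the new edge \(uw\) reconnect the graph without creating a second cycle, and it is what forces the choice \(w\neq v^{\ast}\). Finding a neighbour \(w\) of \(v\) with \(uw\notin E(U)\) is secondary, but it is the place where unicyclicity of \(U\) — equivalently, the tree structure of \(U-e\) — is actually used.
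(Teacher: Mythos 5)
Your proof is correct and follows essentially the same route as the paper's: both construct the 2-switch \(\tau={{\ell\ u}\choose{v\ w}}\) for a neighbour \(w\) of \(v\) chosen away from \(u\), and both conclude via Theorem \ref{uswitchcaract}. You merely make explicit two points the paper leaves implicit, namely why a cycle-neighbour \(w\) of \(v\) with \(uw\notin E(U)\) must exist in part (i), and the component count showing \(\tau\) is a t-switch on \(U-e\) in part (ii).
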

\begin{proof}
	 (i) Since $d_{v}\geq 2$, there exists a vertex $w\neq u$ such that $vw\in \Cycles(U)$. Since $u\notin V(\Cycles(U))$, we can choose $w$ such that $uw \notin E(U)$. Thus, $uw$ and $\ell v$ are not edges of $U$. Moreover, $\ell u\cap vw=\varnothing$. Hence, $\tau:={{\ell \ u}\choose{v \ w}}$ is a nontrivial 2-switch over $U$ such that $\ell v\in\tau(U)$. By Theorem \ref{uswitchcaract}, $\tau(U)$ is a unicyclic graph.
	 
	 (ii)  Since $d_{v}\geq 2$, there exists a vertex $w$, adjacent to $v$, such that $w\neq u$. Let $T_{u}$ be the component of $\Forest(U)$ that contains $u$. We have two possibilities: 1) $v\in T_{u}$; 2) $v\notin T_{u}$. 
	 
	 Suppose $v\in T_{u}$. Since $u,v\notin V(\Cycles(U))$, we can choose $w$ in such a way that $vw\in E(T_{u})$. So, there is a unique path from $\ell$ to $w$ and it has the form $\ell u...vw$. If $\tau:={{\ell \ u}\choose{v \ w}}$, then $\tau$ is a t-switch in $T_{u}$ such that $\ell v\in E(\tau(U))$. Moreover, $\tau$ is clearly a u-switch over $U$. 
	 
	 If $v\notin T_{u}$, we can choose $w$ in such a way thar the two paths from $\ell$ to $w$ have the form $\ell u...vw$. Therefore, $\tau:={{\ell \ u}\choose{v \ w}}$ is a nontrivial 2-switch in $U$ such that $\ell v \in E(\tau(U))$. By Theorem \ref{uswitchcaract}, $\tau$ is actually a u-switch over $U$.
\end{proof}

\begin{corollary}
	\label{sharedleaf.lem3}
	
	Let $U,U'\in\mathcal{U}(s)$ such that $\ell u\in E(U)$,  $\ell v\in E(U')$, $d_{\ell}=1$, and $v\in V(\Cycles(U))$. If $d_{u}=2$, then there exists a u-switch $\tau'$ over $U'$ such that $\ell v\in E(\tau'(U'))$.
\end{corollary}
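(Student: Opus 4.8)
The plan is to read the statement off from the leaf-relocation Lemma \ref{sharedleaf.lem1} applied to $U'$; the two extra hypotheses, $d_{u}=2$ and $v\in V(\Cycles(U))$, are exactly what is needed to make that lemma applicable.

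First I would unpack $d_{u}=2$. Since $\ell u\in E(U)$ and $d_{\ell}=1$, the vertex $u$ is adjacent to a leaf, so if $u$ belonged to the unique cycle of $U$ it would carry its two cycle-edges together with the edge $\ell u$ and have degree at least $3$, contradicting $d_{u}=2$. Hence $u\notin V(\Cycles(U))$. Combined with $v\in V(\Cycles(U))$ this also gives $u\neq v$ (so the statement is not vacuous) and $d_{v}\geq 2$; and since $U,U'\in\mathcal{U}(s)$, the degree $d_{u}=2$ holds in $U'$ as well, in particular $d_{u}\geq 2$.

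Second, I would apply Lemma \ref{sharedleaf.lem1} with $U'$ as the unicyclic graph, $\ell$ the leaf, $v$ its neighbour, and $u$ the target vertex. If $u\in V(\Cycles(U'))$ the first part of that lemma applies; if $u\notin V(\Cycles(U'))$ the second part applies, its remaining hypotheses $u\neq v$ and $d_{u}\geq 2$ having been secured in the previous step. Either way the lemma returns a u-switch $\tau'$ over $U'$ that moves $\ell$ onto $u$, and it also guarantees that $\tau'(U')$ is again unicyclic; that unicyclicity is where Theorem \ref{uswitchcaract} and Lemmas \ref{lem.tswitch.U-e}--\ref{lem2.tswitch.U-e} do the work, but all of it is already packaged inside Lemma \ref{sharedleaf.lem1}.

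The step I expect to be the real obstacle is checking the hypothesis of Lemma \ref{sharedleaf.lem1} that $v$ does not lie on $\Cycles(U')$. When $d_{v}=2$ this is automatic, because one of $v$'s two incident edges already runs to $\ell$. When $d_{v}\geq 3$, however, $v$ could a priori sit on $\Cycles(U')$, and this is precisely where the argument must genuinely use the remaining hypotheses: the interplay of $v\in V(\Cycles(U))$, of $d_{u}=2$ keeping $u$ off $\Cycles(U)$, and of $U$ and $U'$ sharing the degree sequence $s$ has to be pushed far enough either to exclude $v\in V(\Cycles(U'))$ or to supply the u-switch directly. Once this is settled, the corollary is an immediate consequence of Lemma \ref{sharedleaf.lem1}.
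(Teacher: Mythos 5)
Your outline is the right one, and the obstacle you flag at the end is a genuine, unfillable gap---but the fault lies with the printed statement rather than with your strategy. You correctly reduce everything to an application of Lemma \ref{sharedleaf.lem1} to $U'$ and correctly isolate the one hypothesis you cannot verify, namely $v\notin V(\Cycles(U'))$. From the hypotheses as printed ($d_u=2$ and $v\in V(\Cycles(U))$) this cannot be established, and in fact the statement as printed is false. Take $U$ to be the triangle $abc$ with the path $a\,x\,\ell$ attached, and $U'$ the triangle $acx$ with the path $a\,b\,\ell$ attached: then $u=x$ has degree $2$, $v=b\in V(\Cycles(U))$, and the degree sequences agree, yet every nontrivial $2$-switch on $U'$ uses the edge $b\ell$ (the only pair of disjoint edges of $U'$ avoiding $b\ell$ is $\{cx,ab\}$, and both orientations of that pair are trivial since $ac,ax\in E(U')$), so no u-switch of $U'$ retains the edge $\ell v$. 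Thus the step you could not close is not a missing idea on your part.

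The paper's own proof reads ``Since $d_v=2$ and $\ell v\in E(U')$, we have $v\notin V(\Cycles(U'))$. Since also $d_u\geq 3$, we can use Lemma \ref{sharedleaf.lem1} on $U'$.'' Neither $d_v=2$ nor $d_u\geq 3$ is among the printed hypotheses: the roles of $u$ and $v$ have evidently been swapped. The corollary is needed for the one case of Theorem \ref{shared.leaf} not covered by Lemmas \ref{sharedleaf.lem1} and \ref{sharedleaf.lem4}, namely $u\in V(\Cycles(U))$ and $v\in V(\Cycles(U))$ with $d_v=2$; the intended hypotheses are therefore $u\in V(\Cycles(U))$ (which, together with the pendant edge $\ell u$, forces $d_u\geq 3$) and $d_v=2$ (which, together with $\ell v\in E(U')$, forces $v\notin V(\Cycles(U'))$), and the intended conclusion is $\ell u\in E(\tau'(U'))$. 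With those hypotheses your own first remark (``when $d_v=2$ this is automatic'') closes the gap, and your argument coincides with the paper's. It is worth noting that you in fact argued for the conclusion $\ell u\in E(\tau'(U'))$ rather than the printed $\ell v\in E(\tau'(U'))$, which is further evidence that what you reconstructed is the corrected statement.
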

\begin{proof}
	Since $d_{v}=2$ and $\ell v\in E(U')$, we have that $v\notin V(\Cycles(U'))$. Since also $d_{u}\geq 3$, we can use Lemma \ref{sharedleaf.lem1} on $U'$.
\end{proof}

\begin{lemma}	\label{sharedleaf.lem4} 	\label{sharedleaf.lem5}
	Let $U$ be a unicyclic graph such that $\ell u\in E(U)$, $d_{\ell}=1$, and $u\in V(\Cycles(U))$. 
	\begin{enumerate}
		\item If $v\in V(\Cycles(U))$, $v \neq u$, and $d_{v}>2$, then there exists a u-switch $\tau$ over $U$ such that $\ell v\in E(\tau(U))$.
		\item If $v\notin V(\Cycles(U))$ and $d_{v}\geq 2$, then there exists a u-switch $\tau$ over $U$ such that $\ell v\in E(\tau(U))$.
	\end{enumerate}
\end{lemma}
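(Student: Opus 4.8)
The plan is to prove both parts with one construction: in each case I will produce a neighbour $w$ of $v$ and take the 2-switch $\tau={{\ell\ u}\choose{v\ w}}$. Since $d_\ell=1$, the leaf $\ell$ is not on the cycle, so $\ell u\in E(\Forest(U))$; and when $\tau$ is interchangeable in $U$ it deletes the pendant edge $\ell u$ together with $vw$ and inserts $\ell v$ and $uw$, thus relocating $\ell$ from $u$ to $v$ (while moving the piece hanging below $w$ from $v$ to $u$). So $\ell v\in E(\tau(U))$ as soon as $\tau$ is nontrivial, and it only remains to choose $w$ suitably and to check that $\tau$ is a u-switch.

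For (i), $v$ is on the cycle with $d_v>2$, so besides its two cycle-neighbours it has a further neighbour $w$; the edge $vw$ is not a cycle edge, hence $vw\in E(\Forest(U))$ and $w$ lies in the tree of $\Forest(U)$ rooted at $v$. For (ii), $v$ lies off the cycle, so inside some tree $T$ of $\Forest(U)$; as $v$ is not the root of $T$ and $d_v\ge 2$, it has a child $w$ in $T$, which I take. In both cases $\ell,u,v,w$ are pairwise distinct, $\ell v\notin E(U)$ (since $\ell$ is adjacent only to $u\ne v$), and — the only point needing care — $uw\notin E(U)$: in (i), and in (ii) when $T$ is not rooted at $u$, this holds because $u$ roots a tree of $\Forest(U)$ different from the one containing $w$; in (ii) when $T$ is rooted at $u$, it holds because $w$ has unique parent $v\ne u$ in $T$, so $w$ is not adjacent to the root. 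Hence $\tau$ is interchangeable in $U$ and $\ell v\in E(\tau(U))$.

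To conclude that $\tau$ is a u-switch I will invoke Theorem \ref{uswitchcaract}(i): since $\ell u,vw\in E(\Forest(U))$, it suffices to fix a cycle edge $e$ and check that $\tau$ is a t-switch over the tree $U-e$. As $\ell u,vw$ lie in $U-e$ while $\ell v,uw$ do not, $\tau$ is nontrivial over $U-e$. Removing $\ell u$ and $vw$ from the tree $U-e$ isolates $\ell$ and splits off the component $S$ containing $w$; this $S$ contains no vertex of the cycle, so in particular $u\notin S$, and $u$ and $v$ remain together in the third component. Inserting $\ell v$ and $uw$ then reattaches $\ell$ at $v$ and $S$ at $u$, so $\tau(U-e)$ is connected; since it has the same number of edges as the tree $U-e$, it is itself a tree. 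Hence $\tau$ is a t-switch over $U-e$ and, by Theorem \ref{uswitchcaract}(i), a u-switch over $U$.

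The step I expect to be the main obstacle is the verification that $uw\notin E(U)$ in case (ii) when $v$ lies in the tree component rooted at $u$ (the one containing $\ell$): there the clean ``different components of $\Forest(U)$'' argument is unavailable and one must instead use uniqueness of parents in the rooted tree, and one must be careful to take $w$ to be a child of $v$ rather than its parent, since the parent choice would disconnect the subtree below $v$ and fail to yield a u-switch. Everything else is routine bookkeeping with the decomposition $E(U)=E(\Cycles(U))\sqcup E(\Forest(U))$ and the structure of the trees of $\Forest(U)$.
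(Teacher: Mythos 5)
Your proof is correct and follows essentially the same route as the paper: both choose a neighbour $w$ of $v$ lying away from the cycle (respectively away from the root of its tree in $\Forest(U)$) and apply the switch ${{\ell \ u}\choose{v \ w}}$, justified via Theorem \ref{uswitchcaract}. The only difference is presentational: you verify the t-switch condition on $U-e$ by an explicit component count, where the paper appeals to the ``path from $\ell$ to $w$ has the form $\ell u\ldots vw$'' criterion, and your extra care with the interchangeability checks (in particular $uw\notin E(U)$) makes the argument more complete than the original.
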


\begin{proof}
	(i) Since $v\in V(\Cycles(U))-u$ and $d_{v}\geq 3$, we can choose a vertex $w$ such that $vw \in E(\Forest(U))$. Therefore, the two paths in $U$ from $\ell$ to $w$ have the form $\ell u...vw$. Hence, by Theorem \ref{uswitchcaract}, $\tau:={{\ell \ u}\choose{v \ w}}$ is a u-switch over $U$ such that $\ell v\in E(\tau(U))$.
	
	(ii) Since $v\notin V(\Cycles(U))$ and $d_{v}\geq 2$, we can choose a vertex $w\notin V(\Cycles(U))$ in such a way that the path/s (depending on if $v$ lies in the same component of $u$ respect to $\Forest(U)$) from $\ell$ to $w$ has/have the form $\ell u...vw$. Hence, by Theorem \ref{uswitchcaract}, $\tau:={{\ell \ u}\choose{v \ w}}$ is a u-switch over $U$ such that $\ell v\in E(\tau(U))$.
\end{proof}

\begin{definition}
	Let $G,H$ be two graphs. A vertex $\ell\in V(G)\cap V(H)$ is said to be a \textbf{shared leaf} by $G$ and $H$ if $\deg_{G}(\ell)=\deg_{H}(\ell)=1$ and $\ell v\in E(G)\cap E(H)$ for some vertex $v$.    
\end{definition}

\begin{theorem}
	\label{shared.leaf}
	
	Let $U,U'\in\mathcal{U}(s)$. If $U$ and $U'$ are not cycles and do not share leaves, then there exists a u-switch $\tau$ over $U$ such that $\tau(U)$ shares a leaf with $U'$.
\end{theorem}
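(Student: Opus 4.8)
The plan is to produce the shared leaf by relocating, inside $U$, one of the leaves of $U'$. Since $U'$ is unicyclic but not a cycle it has a leaf $\ell$; write $\ell v\in E(U')$. As $U$ and $U'$ have the same degree sequence and $d_\ell=1$, the vertex $\ell$ is a leaf of $U$ as well, say $\ell u\in E(U)$, and $u\neq v$, since otherwise $\ell$ would already be a leaf shared by $U$ and $U'$. Note that $d_v\geq 2$ (if $d_v=1$ then $\{\ell,v\}$ would be a component isomorphic to $K_{2}$, impossible since $U'$ is connected and unicyclic). Thus it suffices to exhibit a u-switch $\tau$ over $U$ with $\ell v\in E(\tau(U))$: then, degrees being preserved, $\ell$ is a leaf of $\tau(U)$ whose unique neighbour is $v$, exactly as in $U'$, so $\ell v\in E(\tau(U))\cap E(U')$ and $\ell$ is a shared leaf of $\tau(U)$ and $U'$.

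I would then distinguish cases according to the positions of $u$ and $v$ relative to $\Cycles(U)$, in each case producing the required u-switch from the transfer lemmas of this section. If $u\notin V(\Cycles(U))$, Lemma \ref{sharedleaf.lem1} applies verbatim: part (i) when $v\in V(\Cycles(U))$, part (ii) when $v\notin V(\Cycles(U))$ (using $v\neq u$ and $d_v\geq 2$). If $u\in V(\Cycles(U))$ and $v\notin V(\Cycles(U))$, Lemma \ref{sharedleaf.lem4}(ii) applies. If $u,v\in V(\Cycles(U))$ with $d_v>2$, Lemma \ref{sharedleaf.lem4}(i) applies. So everything reduces to the single remaining case $u,v\in V(\Cycles(U))$ and $d_v=2$, which I expect to be the crux of the proof.

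In that case both edges at $v$ are cycle edges; let $q_{1},q_{2}$ be the cycle-neighbours of $v$. If some $q_{i}$ satisfies $q_{i}\neq u$ and $uq_{i}\notin E(U)$, then ${{\ell \ u}\choose{v \ q_{i}}}$ is interchangeable in $U$ (because $\ell v\notin E(U)$, as $d_\ell=1$ and $u\neq v$, and $uq_{i}\notin E(U)$) and, deleting one edge of $\Cycles(U)$ and one edge of $\Forest(U)$, it is a u-switch by Theorem \ref{uswitchcaract}(ii); it puts $\ell v$ in $\tau(U)$, as desired. Otherwise $u$ is adjacent in $U$ to every cycle-neighbour of $v$, and since the cycle of a unicyclic graph is chordless this forces the cycle to be either a triangle through $u$ and $v$, or a $4$-cycle $u\,q_{1}\,v\,q_{2}$ on which $u$ and $v$ are antipodal. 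Here I would first apply a preliminary u-switch that re-routes or lengthens the cycle so that $v$ gains a cycle-neighbour not adjacent to $u$, and then conclude with the u-switch above: for the $4$-cycle the u-switch $\tau'={{u \ q_{1}}\choose{v \ q_{2}}}$ works (it adds the two diagonals, is not a breaker, hence is a u-switch by Theorem \ref{uswitchcaract}(iii), and leaves $\ell u$ untouched), and for the triangle one uses the fact that, $d_v=2$ confining all further structure of $U$ to $u$ or to the third cycle-vertex, one can enlarge the cycle through that structure; when no such structure exists, $n$ is small enough that $\mathcal{U}(s)$ consists of a single labelled graph and the statement is vacuous. An equivalent route in this last case, in the spirit of Corollary \ref{sharedleaf.lem3}, is to note that $d_v=2$ forces $v\notin V(\Cycles(U'))$ while $d_u\geq 3$, and to perform the u-switch on $U'$ instead, moving $\ell$ onto $u$. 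Verifying that the preliminary u-switch always exists and does not disturb the pendant edge $\ell u$ is the one place where some care is needed; all the other cases are immediate from Lemmas \ref{sharedleaf.lem1} and \ref{sharedleaf.lem4}.
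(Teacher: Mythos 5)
Your proposal is correct and follows essentially the same route as the paper: the same reduction to a common leaf $\ell$ with neighbours $u$ in $U$ and $v$ in $U'$, the same case analysis via Lemmas \ref{sharedleaf.lem1} and \ref{sharedleaf.lem4}, and, for the crux case $u,v\in V(\Cycles(U))$ with $d_v=2$, the same fallback of performing the switch on $U'$ instead (Corollary \ref{sharedleaf.lem3}), which is all the paper does. The extra direct construction on $U$ that you sketch for that crux case is a nice addition but is not needed and not fully closed (the $4$-cycle subcase uses two switches rather than one, and the triangle subcase's ``enlarge the cycle'' step is left vague), so you should rely on the Corollary \ref{sharedleaf.lem3} route you already give to finish.
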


\begin{proof}
	Since $U,U'\in\mathcal{U}(s)$ but they are not cycles, they have at least one leaf $\ell$. Since they do not share leaves, we have $\ell u\in E(U)$ and $\ell v\in E(U')$, with $u\neq v$ and $d_{v},d_{u}\geq 2$. Now, depending on where vertices $u$ and $v$ are in $U$, we apply Lemma \ref{sharedleaf.lem1}, or Corollary  \ref{sharedleaf.lem3}, or Lemma \ref{sharedleaf.lem4}.
\end{proof}

Note that the empty sequence of 2-switches , denoted by \((\varnothing)\), is an f-switch sequence and also a u-switch sequence.

\begin{lemma}
	\label{shared.edge.cycles.1}
	
	Let $C, C'$ be two cycles with the same degree sequence. If $e\in E(C)\cap E(C')$, then there exists a u-switch sequence transforming $C$ into $C'$.
\end{lemma}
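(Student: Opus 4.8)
The plan is to reduce the statement to the Forest Transition Theorem (Theorem~\ref{TTT}) by cutting both cycles at the shared edge. Write $e=xy$ and note that $C$ and $C'$ have the same vertex set (they have the same degree sequence, and a u-switch sequence preserves the degree of every vertex). If $C=C'$ the empty u-switch sequence does the job, so assume $C\neq C'$; then $|V(C)|=\|C\|\geq 4$, since there is a unique cycle on a given set of at most three vertices.

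First I would set $P:=C-e$ and $P':=C'-e$. Since $C$ and $C'$ are cycles through $e$, each of $P$ and $P'$ is a Hamiltonian path with endpoints $x$ and $y$, and $P$ and $P'$ have the same degree sequence, obtained from that of $C$ (equivalently of $C'$) by lowering the entries at $x$ and $y$ by one. Being trees with the same degree sequence, Theorem~\ref{TTT} supplies a sequence of t-switches $\tau_1,\dots,\tau_m$ with $P_0:=P$, $P_i:=\tau_i(P_{i-1})$ and $P_m=P'$. Each $P_i$ is a tree with the same degree sequence as $P$, namely exactly two vertices of degree $1$ (necessarily $x$ and $y$) and all the others of degree $2$; hence each $P_i$ is a Hamiltonian path from $x$ to $y$, and since $|V(C)|\geq 4$ this path has length at least $2$, so $e=xy\notin E(P_i)$ for every $i$.

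Second — the heart of the argument — I would run the same sequence $\tau_1,\dots,\tau_m$ starting from $C$ and verify it is a u-switch sequence. By induction on $i$ I claim that applying $\tau_1,\dots,\tau_i$ to $C$ produces $P_i+e$, a Hamiltonian cycle (hence unicyclic), and that $\tau_i$ is a u-switch over $P_{i-1}+e$. For the inductive step, write $\tau_i={{a \ b}\choose{c \ d}}$; it is interchangeable in $P_{i-1}$ (being a t-switch it is nontrivial), so $ab,cd\in E(P_{i-1})$ and $ac,bd\notin E(P_{i-1})$. Moreover $ab,cd\neq e$ because $e\notin E(P_{i-1})$, and $ac,bd\neq e$ because $ac,bd\in E(P_i)$ while $e\notin E(P_i)$. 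Therefore $\tau_i$ is still interchangeable in $P_{i-1}+e$ — in particular nontrivial there — and, since $e\notin\{ab,cd\}$ and $e\notin\tau_i(P_{i-1})=P_i$, the observation made in Section~2 (if $e\notin\tau(G-e)$ then $\tau(G-e)+e=\tau(G)$) yields $\tau_i(P_{i-1}+e)=\tau_i(P_{i-1})+e=P_i+e$. This graph is a Hamiltonian cycle, hence unicyclic, so $\tau_i$ is a u-switch over the unicyclic graph $P_{i-1}+e$ (one may alternatively invoke part~(iii) of Theorem~\ref{uswitchcaract}: both edges of $\tau_i$ lie in the cycle $P_{i-1}+e$ and $\tau_i$ does not break it). After the $m$-th step the current graph is $P_m+e=P'+e=C'$, so $\tau_1,\dots,\tau_m$ is the desired u-switch sequence.

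The only genuinely delicate point — the step I expect to be the main obstacle — is this ``lifting'': a priori a $2$-switch that is a bona fide t-switch on the path $P_{i-1}$ could become trivial once $e$ is restored, or could split $P_{i-1}+e$ into two disjoint cycles instead of keeping it unicyclic. Both hazards are eliminated by the single structural fact that every $P_i$ is a Hamiltonian path from $x$ to $y$ of length at least $2$, hence never contains $e$; once that is in hand, everything else is a routine check against the definitions of interchangeable matrix and of u-switch. I would also be careful to dispatch the degenerate cases ($C=C'$, and cycles on at most three vertices) at the very outset.
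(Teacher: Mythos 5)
Your proposal is correct and follows essentially the same route as the paper: delete the shared edge $e$, apply the Forest Transition Theorem to the two resulting paths, observe that every intermediate tree is again a Hamiltonian path between the endpoints of $e$ and hence never contains $e$, and lift the t-switch sequence back to a u-switch sequence on the cycles. You simply supply the details (interchangeability after restoring $e$, the identity $\tau(G-e)+e=\tau(G)$, and the degenerate cases) that the paper's terser proof leaves implicit.
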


\begin{proof}
	Since $C-e$ and $C'-e$ are two trees with the same degree sequence, by the Forest Transition Theorem \ref{TTT},  there is a t-switch sequence $(\tau_{i})_{1\leq i \leq n}$ transforming $C-e$ into $C'-e$. Since $C-e$ is a path, for $i=1,\, \dots, \, n$ the tree $T_{i}$ of the transition is a path too. Moreover,  for $i=1,\, \dots, \, n$ we  have that $e\notin E(T_{i})$. Thus, $(\tau_{i})_{1\leq i \leq n}$ is a u-switch sequence from $C$ to $C'$.
\end{proof}

\begin{lemma}
	\label{shared.edge.cycles.2}
	
	Let $C$ be a cycle. If $u$ and $v$ are non-adjacent vertices of $C$, then there exists a u-switch $\tau$ over $C$ such that $uv\in E(\tau(C))$.
\end{lemma}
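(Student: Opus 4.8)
The plan is to exhibit $uv$ directly as an edge of $\tau(C)$ for a single, explicitly described 2-switch $\tau$, and then to recognize $\tau$ as a u-switch via Theorem~\ref{uswitchcaract}(iii). Since $u$ and $v$ are non-adjacent, $C$ has length at least $4$, and deleting the vertices $u$ and $v$ leaves two vertex-disjoint paths $P$ and $P'$ (the two arcs of $C$ joining $u$ and $v$), each containing at least one vertex. Let $a$ be the endpoint of $P$ that is adjacent in $C$ to $u$, and let $b$ be the endpoint of $P'$ that is adjacent in $C$ to $v$ (these are well defined even when $|P|=1$ or $|P'|=1$, in which case the single vertex of that arc plays both roles). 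Put $\tau:={{u \ a}\choose{v \ b}}$.

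First I would check that $\tau$ is a nontrivial 2-switch over $C$. The edges $ua$ and $vb$ belong to $C$; they are disjoint because $u\neq v$ and the four vertices lie in $\{u\}$, $V(P)$, $\{v\}$, $V(P')$ respectively, which are pairwise disjoint; the condition $uv\notin E(C)$ is exactly the hypothesis; and $ab\notin E(C)$ because $a\in V(P)$ and $b\in V(P')$ lie in different components of $C-u-v$, so no edge of $C$ joins them. Hence $\tau$ acts nontrivially on $C$, and by construction $uv$ is one of the two added edges, so $uv\in E(\tau(C))$.

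Next I would identify $\tau(C)$. Removing $ua$ and $vb$ from $C$ splits the cycle into the path running from $a$ along $P$ and then to $v$, and the path running from $b$ along $P'$ and then to $u$; adding the edge $uv$ joins these into a single path with endpoints $a$ and $b$, and adding the edge $ab$ closes that path into a single cycle (of the same length as $C$). Thus $\tau(C)$ is connected with exactly one cycle, so $\tau$ is \emph{not} a breaker over $\Cycles(C)=C$. Since both edges involved lie in $E(\Cycles(C))$, Theorem~\ref{uswitchcaract}(iii) applies and shows that $\tau$ is a u-switch over $C$, which completes the proof.

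The only point requiring care is the choice of the two edges to break: if instead one removed $ua$ together with the edge joining $v$ to the endpoint of $P$ adjacent to $v$, one would again create $uv$, but the resulting graph would be a disjoint union of two cycles, i.e. a breaker, and the 2-switch would fail to be a u-switch. So the argument hinges on breaking an edge at $u$ and an edge at $v$ lying on \emph{opposite} arcs of $C$; the degenerate cases $|P|=1$, $|P'|=1$, or both, need only the parenthetical remark above.
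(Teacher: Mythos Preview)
Your proof is correct and follows essentially the same approach as the paper: both pick an edge at $u$ and an edge at $v$ on opposite arcs of $C$ and apply Theorem~\ref{uswitchcaract}(iii). The paper's proof is terser (it names the neighbors $x$ of $u$ and $w$ of $v$ with $w$ outside the $u$--$x$--$\dots$--$v$ path and appeals directly to Theorem~\ref{uswitchcaract}), whereas you spell out explicitly why the matrix is interchangeable and why $\tau(C)$ remains a single cycle; the extra remark about the failure when both broken edges lie on the same arc is a nice clarification but not needed for the argument.
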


\begin{proof} Clearly, $|C|>3$. Choose a neighbor of $u$, call it $x$. This choice determines a unique path $P$ in $C$, from $u$ to $v$, passing through $x$. Let $w$ be the neighbor of \(v\) outside of $P$. By Theorem \ref{uswitchcaract}, $\tau:={{u \ x}\choose{v \ w}}$ is the required u-switch.
\end{proof}

\begin{theorem}
	\label{cycle.transition}
	
	If $C$ and $C'$ are two cycles with the same degree sequence, then there exists a u-switch sequence transforming $C$ into $C'$. 
\end{theorem}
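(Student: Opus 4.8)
The plan is to reduce Theorem~\ref{cycle.transition} to Lemma~\ref{shared.edge.cycles.1} by first producing, from an arbitrary pair of cycles $C,C'\in\mathcal{U}(s)$ with the same degree sequence, a single cycle that shares an edge with $C'$ and is reachable from $C$ by a u-switch sequence. Once a shared edge is in hand, Lemma~\ref{shared.edge.cycles.1} finishes the job.

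First I would dispose of the trivial cases. If $C$ and $C'$ already share an edge, apply Lemma~\ref{shared.edge.cycles.1} directly and we are done; the empty u-switch sequence also handles $C=C'$. Note that since $C$ and $C'$ have the same degree sequence and are cycles, $|C|=|C'|=n$, and if $n=3$ then $C=C'=K_3$, so we may assume $n\geq 4$. So assume $C$ and $C'$ do not share an edge. Pick any edge $uv\in E(C')$. In $C'$, $u$ and $v$ are adjacent; in $C$ they need not even both be... well, $V(C)=V(C')$ since both equal $[n]$ with the same degree sequence realized on a cycle, so $u,v\in V(C)$. Either $u$ and $v$ are adjacent in $C$ — but then $uv\in E(C)\cap E(C')$, contradicting our assumption — or $u$ and $v$ are non-adjacent in $C$. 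In the latter case, Lemma~\ref{shared.edge.cycles.2} hands us a u-switch $\tau$ over $C$ with $uv\in E(\tau(C))$. Set $C_1:=\tau(C)$; this is a cycle (a u-switch over a cycle that is not a breaker yields an isomorphic cycle, and $\tau$ is nontrivial so it produces a cycle on the same vertex set), and $C_1$ shares the edge $uv$ with $C'$.

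Then I would apply Lemma~\ref{shared.edge.cycles.1} to $C_1$ and $C'$: since $uv\in E(C_1)\cap E(C')$ and both are cycles with the same degree sequence, there is a u-switch sequence transforming $C_1$ into $C'$. Concatenating the single u-switch $\tau$ with this sequence gives a u-switch sequence from $C$ to $C'$, as required.

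The only point needing slight care — and I expect it to be the main (minor) obstacle — is the degenerate situation where Lemma~\ref{shared.edge.cycles.2} cannot be invoked because \emph{every} vertex pair of $C'$'s edges is already adjacent in $C$. But if $uv\in E(C')$ forces $u\sim_C v$ for one such edge, then $uv\in E(C)$ as well, so $C$ and $C'$ share an edge and we are in the trivial case handled at the outset. Thus the dichotomy ``shares an edge'' versus ``some edge $uv$ of $C'$ is a non-adjacent pair in $C$'' is exhaustive, and in the second branch Lemma~\ref{shared.edge.cycles.2} applies. One should also confirm, when $n=4$, that Lemma~\ref{shared.edge.cycles.2}'s hypothesis (a pair of non-adjacent vertices exists) is met: on $C_4$ the two diagonals are non-adjacent pairs, so this is fine, and $|C|>3$ is exactly what the proof of that lemma needs.
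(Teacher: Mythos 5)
Your proof is correct and follows essentially the same route as the paper: handle the shared-edge case with Lemma~\ref{shared.edge.cycles.1}, and otherwise use Lemma~\ref{shared.edge.cycles.2} to create a shared edge first. The paper's proof is a two-line version of exactly this argument; your additional care about the exhaustiveness of the adjacent/non-adjacent dichotomy and the $n\leq 4$ cases is sound but not a different method.
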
  

\begin{proof} If $E(C)\cap E(C')\neq \varnothing$, use Lemma \ref{shared.edge.cycles.1}. Otherwise, use first Lemma \ref{shared.edge.cycles.2} and then Lemma \ref{shared.edge.cycles.1}.
\end{proof}

The next Theorem says that all the unicyclic graphs with a given degree sequence are a Berge family.

\begin{theorem}
	\label{unicTransition}
	
	If $U,U'\in \mathcal{U}(s)$, then there exists a u-switch sequence transforming $U$ into $U'$.
\end{theorem}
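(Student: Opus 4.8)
The plan is to induct on the number of leaves that $U$ and $U'$ fail to share, reducing everything to the all-cycles case already handled by Theorem~\ref{cycle.transition}. First I would dispose of the degenerate situations: if both $U$ and $U'$ are cycles, Theorem~\ref{cycle.transition} finishes the argument immediately. If exactly one of them is a cycle, this is impossible, since two graphs with the same degree sequence $s$ are either both cycles (when $s=(2,\dots,2)$) or neither is. So we may assume neither $U$ nor $U'$ is a cycle, hence both have at least one leaf.

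The main induction is on the quantity $q(U,U')$ counting the shared leaves: more precisely, I would argue by induction on the number of leaves of $U$ (equivalently of $U'$, since they have the same degree sequence) that are \emph{not} shared with the other graph. If $U$ and $U'$ share no leaves, apply Theorem~\ref{shared.leaf} to obtain a u-switch $\tau$ over $U$ so that $\tau(U)$ shares a leaf with $U'$; replace $U$ by $\tau(U)$. Now suppose $U$ and $U'$ share a leaf $\ell$, say $\ell u \in E(U)\cap E(U')$. Delete $\ell$ and its edge from both graphs; I claim the resulting graphs $U-\ell$ and $U'-\ell$ are unicyclic graphs with the same (shorter) degree sequence, so by the inductive hypothesis there is a u-switch sequence $(\tau_i)$ transforming $U-\ell$ into $U'-\ell$ with every intermediate graph unicyclic. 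The key point is then that each $\tau_i$ lifts to a u-switch on the graph with $\ell$ reattached: since $\deg(\ell)=1$, the leaf edge $\ell u$ is never one of the two edges acted on by any action matrix whose entries lie in $U-\ell$, and adding back a pendant edge at a vertex of an intermediate unicyclic graph keeps it unicyclic. Hence $(\tau_i)$ is also a u-switch sequence from $U$ to $U'$, and we are done.

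The point that needs the most care — and which I expect to be the main obstacle — is the lifting step: one must be sure that when a u-switch sequence acts on $U-\ell$, none of the action matrices involves the vertex $\ell$, and that reattaching the pendant edge $\ell u$ to every intermediate graph produces exactly the corresponding intermediate graph for $U$. This is essentially the ``pendant edge is inert'' observation already used implicitly in Lemma~\ref{lem.tswitch.U-e} and in the proofs of Section~2, but it should be spelled out: if $A={{a\ b}\choose{c\ d}}$ has $a,b,c,d \in V(U)\setminus\{\ell\}$, then $A$ is interchangeable in $G$ iff it is interchangeable in $G+\ell u$, and $\tau_A(G+\ell u) = \tau_A(G)+\ell u$; moreover if $\tau_A(G)$ is unicyclic so is $\tau_A(G)+\ell u$, because attaching a tree (here a single pendant vertex) to a vertex of a unicyclic graph yields a unicyclic graph. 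A secondary subtlety is the base case of the induction on leaves: once no leaf remains, the degree sequence is $(2,\dots,2)$ and the graph is a cycle, which is precisely Theorem~\ref{cycle.transition}; but one should also note that the process of repeatedly removing shared leaves terminates, since each removal strictly decreases the order. Finally, the recursion from Theorem~\ref{shared.leaf} (forcing a new shared leaf by a single u-switch) and the recursion peeling off a shared leaf must be interleaved correctly, so the cleanest organization is a single induction on $|U|$: either $U$ is a cycle (done), or it has a leaf; if that leaf is not shared, one u-switch makes it shared (Theorem~\ref{shared.leaf}); then peel it off, apply the inductive hypothesis to the smaller unicyclic graphs, and lift.
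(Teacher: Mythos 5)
Your proposal is correct and follows essentially the same route as the paper: handle the cycle case via Theorem~\ref{cycle.transition}, otherwise induct on the order, using Theorem~\ref{shared.leaf} to force a shared leaf and then peeling it off and lifting the u-switch sequence from $U-\ell$ back to $U$. The only difference is that you make explicit the ``pendant edge is inert'' lifting argument, which the paper leaves implicit in the sentence ``the same sequence is a u-switch sequence transforming $U$ into $U'$ too''; this is a welcome clarification but not a different proof.
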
 

\begin{proof} If $U=U'$, $(\varnothing)$ is the required sequence. Assume $U\neq U'$. If $U$ and $U'$ are cycle graphs, use Theorem \ref{cycle.transition}. Otherwise, notice that $U$ and $U'$ have at least one leaf. For this part of the proof, we proceed by induction on $n=|U|=|U'|$. For $n\leq 4$ the statement is trivially true. Let $n>4$ and suppose the statement holds for any two unicyclic graphs of order less than $n$ with the same degree sequence. 

If $U$ shares a leaf $\ell$ with $U'$, then $U-\ell$ and $U'-\ell$ are two unicyclic graphs of order $n-1$ with the same degree sequence. Thus, the inductive hypothesis applies: there exists a u-switch sequence transforming $U-\ell$ into $U'-\ell$. Hence, the same sequence is a u-switch sequence transforming $U$ into $U'$ too.

If $U$ and $U'$ do not share leaves, by Theorem \ref{shared.leaf}, there exists a u-switch \(\tau\) such that \(\tau(U)\) and \(U'\) share a leaf. 
\end{proof}

%
%

\section{p-switch}
Let $G$ be a graph, by $\cycles(G)$ we denote the number of subgraphs of $G$ isomorphic to a cycle. The number $c(G):=\max_{H\in \mathcal{K} (G)}\{\cycles(H)\}$ is called \textbf{cyclicity} of \(G\). In other words, $G$ contains at most $c(G)$ cycles in each of its components.

\begin{definition}
	\label{pseudodef}
	
	A \textbf{pseudoforest} is a graph $G$ such that $c(G)\in \{0,1\}$.
\end{definition}
The set of all forests with degree sequence $s$ is denoted by $\mathcal{F}(s)$. The set of all pseudoforests with degree sequence $s$ is denoted by $\mathcal{P}(s)$. Clearly: $\mathcal{U}(s)\subseteq \mathcal{P}(s)$. These two families, $\mathcal{U}(s)$ and $\mathcal{P}(s)$, do not behave ``well'' like trees and forests: recall that if $T\in \mathcal{F}(s)$ is a tree, then all members of $\mathcal{F}(s)$ are trees too. In fact, if $U\in \mathcal{P}(s)$ is unicyclic, then the rest of pseudoforests in $\mathcal{P}(s)$ need not be all unicyclic.

\begin{definition}
	\label{pswitchdef}
	
	A nontrivial 2-switch $\tau$ over a pseudoforest $G$ is said to be a \textbf{p-switch} if $\tau(G)$ is a pseudoforest.
\end{definition}

Notice that t-switches, f-switches and u-switches are clearly particular cases of p-switches. 

Given two graphs \(H\) and \(G\) such that \(V(G)\cap V(H)=\varnothing\), by \(G\dot{\cup}H \) we denote the disjoint union of both, i.e. the graph with vertex set \(V(G) \cup V(H)\) and edge set \(E(G) \cup E(G)\).

\begin{lemma}
	\label{lem2switchpswitch}
	Let $F$ be a forest and let $U$ be a unicyclic graph vertex-disjoint from \(F\). We have the following:
	\begin{enumerate}
		\item Every nontrivial 2-switch over $F$ is a p-switch.
		\item Let $\tau= {{a \ b}\choose{c \ d}}$ be a 2-switch over $G=F\dot{\cup}U$. If $ab\in E(F)$ and $cd\in E(\Forest(U))$, then $\tau$ is a p-switch in $G$.
	\end{enumerate}
\end{lemma}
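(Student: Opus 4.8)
The plan is to handle the two items separately, since they are both essentially bookkeeping arguments about how a $2$-switch can affect the number of cycles in each component. For item (i): let $\tau$ be a nontrivial $2$-switch over the forest $F$. Since $F$ is a forest, $c(F)=0$. By the stability of the number of connected components under $2$-switch, $\kappa(\tau(F))\in\{\kappa(F)-1,\kappa(F),\kappa(F)+1\}$; more to the point, a $2$-switch changes $\|F\|$ not at all and $|F|$ not at all, so $\|\tau(F)\|=\|F\|\le |F|-\kappa(F)$. If $\kappa(\tau(F))\ge \kappa(F)-1$ then $\|\tau(F)\|\le |F|-\kappa(\tau(F))+1$... this is not quite tight enough, so instead I would argue directly: a $2$-switch replaces edges $ab,cd$ by $ac,bd$. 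If a cycle $Z$ appeared in $\tau(F)$, it would have to use at least one of $ac,bd$ (otherwise $Z\subseteq F$, impossible). Tracing the $F$-paths between the endpoints shows that the existence of such a $Z$ forces a cycle already present in $F$, a contradiction; hence $\tau(F)$ is again a forest, so $c(\tau(F))=0$ and $\tau$ is a p-switch. (Alternatively and more cleanly: $\tau(F)$ has the same degree sequence as $F$, hence the same number of edges; a graph with $|F|$ vertices and $\|F\|\le |F|-1$ edges whose every component is... — but a $2$-switch can merge two tree-components into one component, which would then have $\|F\|$ edges spread over fewer vertices; the key point is that merging two trees via swapping two edges cannot create a cycle, which is exactly the path-tracing observation above.)

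For item (ii): write $G=F\mathbin{\dot\cup}U$ with $ab\in E(F)$ and $cd\in E(\Forest(U))$, and let $\tau={{a\ b}\choose{c\ d}}$ be nontrivial over $G$. The idea is to isolate the unique cycle $C$ of $U$ and show it survives untouched while the rest behaves like a forest $2$-switch. Since $cd\in E(\Forest(U))$, at least one of $c,d$ — say $d$ — lies outside $V(\Cycles(U))$, and in any case neither $ab$ nor $cd$ is an edge of $C$ (the edge $ab$ is in $F$, disjoint from $U$; the edge $cd$ is in $\Forest(U)$, edge-disjoint from $\Cycles(U)$). Therefore $C$ is a subgraph of $\tau(G)$. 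It remains to show $\tau(G)$ contains no cycle other than $C$ and that no component of $\tau(G)$ contains two cycles. Consider $G-E(C)$, which is a forest: it is the disjoint union of $F$ and $\Forest(U)$, and $\tau$ restricted to it is a nontrivial (or trivial) $2$-switch between $ab\in E(F)$ and $cd\in E(\Forest(U))$. By item (i), $\tau(G-E(C))$ is a forest. Now $\tau(G)=\tau(G-E(C))+E(C)$, since none of the edges of $C$ participates in $\tau$; adding back the edge set of a single cycle to a forest produces a graph whose components are trees and unicyclic graphs — i.e. $c(\tau(G))\le 1$ — provided the newly added cycle does not get joined (via the forest part) to another cycle, but there is no other cycle, so $c(\tau(G))\in\{0,1\}$ and $\tau(G)$ is a pseudoforest. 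Hence $\tau$ is a p-switch.

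The main obstacle I anticipate is item (i): proving cleanly that a nontrivial $2$-switch on a forest cannot create a cycle. The stability results quoted earlier give only a bound of $\pm 1$ on $\kappa$, which is consistent with (but does not by itself rule out) the birth of a cycle when two tree-components are merged. The honest argument is the path-tracing one: in $\tau(F)$, any cycle must use at least one new edge, say $ac$; following the unique $F$-path from $a$ to $c$ (and, if the cycle uses $bd$ as well, the unique $F$-path from $b$ to $d$) and reassembling, one recovers a closed walk in $F$, hence a cycle in $F$ — contradiction. Making "reassembling" precise (the walk might repeat vertices, so one extracts a genuine cycle from it) is the one place where care is needed; everything in item (ii) then follows formally by restricting to $G-E(C)$ and the earlier observation that $\tau(G-e)+e=\tau(G)$ when $e\notin\{ab,cd\}$, applied here with the whole edge set $E(C)$ in place of a single edge $e$.
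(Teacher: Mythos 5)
There is a genuine gap in your item (i): the claim that $\tau(F)$ is again a forest is false, and the whole path-tracing strategy is aimed at proving this false statement. Take $F$ to be the path $b-a-x-c-d$ and $\tau={{a \ b}\choose{c \ d}}$. This matrix is interchangeable in $F$ ($ab,cd\in E(F)$, $ac,bd\notin E(F)$), and $\tau(F)$ has edge set $\{ax,xc,ac,bd\}$: a triangle together with a disjoint edge, i.e.\ not a forest. Your reassembling step breaks exactly here: replacing the new edge $ac$ of the triangle by the unique $F$-path from $a$ to $c$ yields the closed walk $a-x-c-x-a$, which only backtracks and contains no cycle of $F$, so no contradiction is reached. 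What is true, and what the lemma asserts, is only that $\tau(F)$ is a pseudoforest; the intended argument is the counting one you started and abandoned: $\|\tau(F)\|=\|F\|=|F|-\kappa(F)$ and, by stability of the number of components, $\kappa(\tau(F))\leq\kappa(F)+1$, so the cyclomatic number $\|\tau(F)\|-|\tau(F)|+\kappa(\tau(F))$ is at most $1$, and a graph with at most one independent cycle is a pseudoforest.

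This propagates into your item (ii), where you invoke item (i) to conclude that $\tau(G-E(C))$ is a forest. Correctly proved, item (i) only yields a pseudoforest, which is not enough: a pseudoforest with one cycle plus the edges of $C$ could have a component with two cycles. The step is repairable, and the repair is the paper's argument: since $ab\in E(F)$ and $cd\in E(\Forest(U))$ lie in \emph{different components} of the forest obtained by deleting an edge $e\in E(\Cycles(U))$, each new edge $ac$, $bd$ joins pieces of two distinct components, so $\tau(G-e)$ is genuinely a forest; and since $|e\cap\{a,b,c,d\}|\leq 1$, $e$ is none of $ab,cd,ac,bd$, so $\tau(G-e)+e=\tau(G)$ and adding the single edge $e$ creates at most one cycle. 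Note also that your closing principle, that adding the edge set of a cycle back to a forest produces a pseudoforest, is false in general (adding a triangle on the three leaves of $K_{1,3}$ gives cyclomatic number $3$); it holds in your setting only because the vertices of $C$ lie in pairwise distinct components of $\tau(G-E(C))$, which you would still have to verify. Deleting a single edge of the cycle, rather than all of $E(C)$, sidesteps this entirely.
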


\begin{proof}
	Statement (i) holds because the number of connected components is stable. For (ii), choose any $e\in E(\Cycles(U))$ and notice that $G-e$ is a forest. Since $ab$ and $cd$ are in different components, $\tau(G-e)$ is a forest and so $\tau(G-e)+e$ contains at most one cycle. Since $|e\cap \{a,b,c,d\}|\in \{0,1\}$, $e\notin\tau(G-e)$. Hence, $\tau(G-e)+e=\tau(G)$.
\end{proof}
The next Lemma is a direct consequence of Theorem \ref{uswitchcaract}.
\begin{lemma}
	\label{2switch.in.C_U=pswitch.in.U}

	If $U\in \mathcal{U}(s)$, then every nontrivial 2-switch between two edges of $\Cycles(U)$ is a p-switch over $U$.
\end{lemma}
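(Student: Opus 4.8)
The plan is to reduce everything to Theorem \ref{uswitchcaract}(iii) together with the elementary fact, recorded just before the definition of a u-switch, that a breaker over a cycle graph produces a disjoint union of two cycles. Let $\tau$ be a nontrivial 2-switch between two edges $e_{1},e_{2}\in E(\Cycles(U))$, and write $C=\Cycles(U)$ for the unique cycle of $U$. I would split into two cases according to whether $\tau$ is a breaker over $C$.

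If $\tau$ is not a breaker over $C$, then Theorem \ref{uswitchcaract}(iii) states directly that $\tau$ is a u-switch over $U$, so $\tau(U)\in\mathcal{U}(s)\subseteq\mathcal{P}(s)$; in particular $\tau$ is a p-switch over $U$.

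If $\tau$ is a breaker over $C$, then $\tau(C)$ is the disjoint union of two cycles $C_{1}$ and $C_{2}$ on the vertex set $V(C)$. Since $\tau$ involves only edges of $C$, it leaves $\Forest(U)$ untouched, and each component of $\Forest(U)$ is a tree attached to a single vertex of $C$; after the switch that attachment vertex lies in exactly one of $C_{1}$, $C_{2}$. Hence $\tau(U)$ has exactly two connected components, each consisting of one of $C_{1},C_{2}$ together with a (possibly empty) collection of trees hanging from it, so each component is unicyclic. Therefore $c(\tau(U))=1$ and $\tau(U)\in\mathcal{P}(s)$, i.e. $\tau$ is a p-switch over $U$.

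Combining the two cases proves the statement. There is no genuine obstacle here; the only point needing a line of care is the breaker case, where one must observe that replacing $C$ by two vertex-disjoint cycles while leaving the attached forest in place produces a graph each of whose components is unicyclic — which is immediate, since no edge of $\Forest(U)$ is disturbed and $C_{1}$, $C_{2}$ are vertex-disjoint.
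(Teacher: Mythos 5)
Your proof is correct and follows essentially the same route as the paper, which simply declares the lemma ``a direct consequence of Theorem \ref{uswitchcaract}.'' You in fact supply the one detail the paper leaves implicit: in the breaker case Theorem \ref{uswitchcaract}(iii) only tells you $\tau$ is \emph{not} a u-switch, and your observation that $\tau(U)$ then splits into two unicyclic components (hence is still a pseudoforest) is exactly what is needed to close that gap.
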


\begin{lemma}
	\label{lema2switchpswitch}
	
	Let $\tau= {{a \ b}\choose{c \ d}}$ be a nontrivial 2-switch on $G\in \mathcal{P}(s)$. Suppose that one of the following conditions holds: 
	\begin{enumerate}
		\item $ab\in E(\Forest(G))$ and $cd\in E(\Cycles(G))$;
		\item $ab,cd\in E(\Cycles(G))$.
	\end{enumerate}
	Then, $\tau$ is p-switch over $G$. 
\end{lemma}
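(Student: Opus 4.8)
The plan is to localise $\tau$ to the one or two components of $G$ on which it actually acts, to handle the one-component cases by the results already proved for unicyclic graphs, and to settle the remaining cases by an explicit count of cycles. Since $\tau$ only deletes or adds edges incident with $a,b,c,d$, every component of $\tau(G)$ disjoint from $\{a,b,c,d\}$ is already a component of $G$ and hence contains at most one cycle; so it suffices to bound the cyclicity of the component(s) of $\tau(G)$ meeting $\{a,b,c,d\}$. Let $K_1,K_2\in\mathcal{K}(G)$ be the components with $a,b\in V(K_1)$ and $c,d\in V(K_2)$. The argument then splits according to whether $K_1=K_2$ or $K_1\neq K_2$.

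Suppose first $K_1=K_2=:K$. Then $K$ is a unicyclic component and $\tau$ restricts to a nontrivial $2$-switch on $K$ (indeed $a,b,c,d\in V(K)$, $ab,cd\in E(K)$ and $ac,bd\notin E(G)\supseteq E(K)$); moreover $E(\Forest(G))\cap E(K)=E(\Forest(K))$ and $E(\Cycles(G))\cap E(K)=E(\Cycles(K))$. Under hypothesis (i) this places us exactly in case (ii) of Theorem \ref{uswitchcaract}, one edge of the switched pair lying in $\Cycles(K)$ and the other in $\Forest(K)$, so that $\tau(K)$ is unicyclic; under hypothesis (ii) both edges lie on the cycle of $K$, and Lemma \ref{2switch.in.C_U=pswitch.in.U} gives directly that $\tau(K)$ is a pseudoforest. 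In either case $\tau(G)\in\mathcal{P}(s)$.

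Suppose instead $K_1\neq K_2$, so that $\tau$ fuses pieces of $K_1$ and $K_2$; here I would trace the effect of $\tau$ directly. Under hypothesis (i), $ab\in E(\Forest(G))$ is a bridge of $K_1$, hence $K_1-ab$ has exactly two components $A_a\ni a$ and $A_b\ni b$, while $K_2-cd$ is a tree (an edge removed from the unique cycle of $K_2$); thus $A_a$, $A_b$, $K_2-cd$ are three distinct components of $G-ab-cd$, and in $\tau(G)$ the edge $ac$ joins $A_a$ to $K_2-cd$ and then $bd$ joins $A_b$ to that union, neither edge closing a cycle, so the resulting component carries only the (at most one) cycle already present in $K_1-ab$. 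Under hypothesis (ii), both $K_1-ab$ and $K_2-cd$ are trees lying in distinct components; adding $ac$ fuses them into a single tree $T$, after which $bd$ joins two vertices of $T$ and creates exactly one cycle, so the resulting component is unicyclic. In every case $c(\tau(G))\le 1$, i.e.\ $\tau(G)$ is a pseudoforest.

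The one point needing genuine care is the bookkeeping in the $K_1\neq K_2$ half of hypothesis (i): one must check that at the moment $bd$ is added, $A_b$ is still a separate component from $A_a\cup(K_2-cd)$, so that $bd$ cannot close a cycle; this holds because $\tau$ alters no edge of $A_b$. Beyond that, the argument reduces either to a direct appeal to Theorem \ref{uswitchcaract} or Lemma \ref{2switch.in.C_U=pswitch.in.U}, or to the elementary observation that adding an edge between two distinct components creates no new cycle.
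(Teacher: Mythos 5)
Your proof is correct and follows essentially the same route as the paper: the same split into ``same component'' versus ``different components,'' with the former handled by Theorem \ref{uswitchcaract} and Lemma \ref{2switch.in.C_U=pswitch.in.U} and the latter by tracking how the switch breaks or merges cycles. Your treatment of the two-component subcases is more detailed than the paper's (which only remarks that the cycle through $cd$ is broken, resp.\ that the two cycles are glued into one), but the underlying argument is the same.
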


\begin{proof}
	For each case of the hypothesis we have the following subcases: (A) $ab$ and $cd$ lie in the same component $H$ of $G$; (B) $ab$ and $cd$ lie in different components of $G$. 

	For (i.A) use respectively Theorem \ref{uswitchcaract} and for (ii.A) use Lemma \ref{2switch.in.C_U=pswitch.in.U}. For (i.B), observe that $\tau$ breaks the cycle that contains $cd$, and thus $c(\tau(G))\leq c(G)$. 	Finally, for (ii.B), note that $\tau$ glues the two cycles containing $ab$ and $cd$ together in a new cycle. Hence, $c(\tau(G))=c(G)$.
\end{proof}

\begin{lemma}
	\label{lemma2pswitchFUFU'}
	
	Let $\tau = {{a \ b}\choose{c \ d}}$ be a 2-switch on a pseudoforest $G$ and let $U$ and $U'$ be different unicyclic components of $G$ such that $ab\in \Forest(U)$ and $cd\in \Forest(U')$. Then, $\tau$ is a p-switch in $G$ if and only if $\tau'={{a \ b}\choose{d \ c}}$ is not. 
\end{lemma}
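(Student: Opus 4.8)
The plan is to analyze what the 2-switch $\tau = {{a\ b}\choose{c\ d}}$ does to the two unicyclic components $U$ and $U'$, and compare it with the ``twisted'' switch $\tau' = {{a\ b}\choose{d\ c}}$. First I would observe that since $ab\in E(\Forest(U))$ and $cd\in E(\Forest(U'))$ lie in different components, both $\tau$ and $\tau'$ are nontrivial: the vertices $a,b,c,d$ are pairwise distinct (being in two disjoint components), and none of the new edges $ac,bd$ (resp. $ad,bc$) can already be present since $a,b\in V(U)$ and $c,d\in V(U')$ with $U,U'$ vertex-disjoint. Applying $\tau$ deletes $ab$ and $cd$ and adds $ac$ and $bd$, thereby merging $U$ and $U'$ into a single connected component $W := \tau(U\dot\cup U')$ on $|U|+|U'|$ vertices, while all other components of $G$ are untouched. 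Since deleting $ab$ splits $U$ into two pieces and deleting $cd$ splits $U'$ into two pieces, $W$ is built from four trees-or-unicyclic pieces joined by the two new edges $ac$ and $bd$; in fact $W$ has exactly $|U|+|U'|$ edges, so $W$ has cyclomatic number $|U|+|U'|-(|U|+|U'|)+1 = 1$ relative to being connected... more carefully, $\|W\| = \|U\|+\|U'\| = |U|+|U'| = |W|$, so $W$ is connected with $\|W\|=|W|$, hence $W$ is unicyclic \emph{provided it is connected}, and it contains \emph{some} cycle regardless. The key point: $\tau$ is a p-switch if and only if $W$ is connected (equivalently unicyclic), i.e. if and only if $W$ does not split off a piece, which would force $W$ to be a disjoint union of a unicyclic graph and a tree — but then $c(\tau(G))$ could still be $1$... so the real dichotomy is whether $\tau(G)$ stays within cyclicity $\le 1$.

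The heart of the argument is the following structural observation. Deleting $ab$ from $U$ yields two trees $T_a\ni a$ and $T_b\ni b$, except that the unique cycle of $U$ lies entirely in one of them, say the cycle of $U$ lies in the piece containing $a$ if $a$ is ``on the cycle side'' of the edge $ab$, and otherwise in the piece containing $b$; since $ab\in E(\Forest(U))$, exactly one of the two pieces obtained from $U - ab$ contains the cycle $C_U$ of $U$, call it $R_U$, and the other, call it $S_U$, is a tree. Likewise $U' - cd$ gives a piece $R_{U'}$ containing the cycle $C_{U'}$ and a tree piece $S_{U'}$. Now $\tau$ adds $ac$ and $bd$. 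Whether $\tau(G)$ is a pseudoforest depends on whether the two cycles $C_U$ and $C_{U'}$ end up in the same connected component of $\tau(G)$: if they do, that component has two cycles and $\tau$ is not a p-switch; if they end up in different components, each such component has exactly one cycle and $\tau$ is a p-switch. The edges $ac$ and $bd$ together with the four pieces $R_U, S_U, R_{U'}, S_{U'}$ determine which configuration arises, and this depends only on which pieces $a,b$ belong to in $U-ab$ and which pieces $c,d$ belong to in $U'-cd$. The crucial combinatorial fact is that $\tau$ and $\tau'$ swap the roles of $c$ and $d$, hence they swap which piece of $U'-cd$ gets linked to $R_U$ versus $S_U$; consequently exactly one of $\tau,\tau'$ places $C_U$ and $C_{U'}$ in the same component and the other separates them.

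Concretely, label the pieces so that $a\in R_U$ or $a\in S_U$, etc. Write $\alpha \in \{R,S\}$ for the piece (of $U-ab$) containing $a$, and $\beta$ the one containing $b$ (so $\{\alpha U\text{-piece}, \beta U\text{-piece}\} = \{R_U, S_U\}$); similarly $\gamma$ for $c$ and $\delta$ for $d$ in $U'-cd$. Under $\tau$ we add $ac$ (joining the $a$-piece to the $c$-piece) and $bd$ (joining the $b$-piece to the $d$-piece); under $\tau'$ we add $ad$ and $bc$. In $\tau(G)$, the cycle $C_U$ sits in $R_U$ and $C_{U'}$ sits in $R_{U'}$, and they are in the same component iff the string of links $ac, bd$ connects $R_U$ to $R_{U'}$ — which happens iff ($a\in R_U$ and $c\in R_{U'}$) or ($b\in R_U$ and $d\in R_{U'}$), equivalently iff exactly one of the pairs $(a,c)$, $(b,d)$ both lie in the $R$-pieces, noting $a\in R_U \iff b\in S_U$. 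A short case check over the four possibilities for $(\alpha,\gamma)$ shows that $C_U, C_{U'}$ are co-componental under $\tau$ in exactly those cases where they are \emph{not} co-componental under $\tau'$. Hence $\tau$ is a p-switch $\iff$ $C_U$ and $C_{U'}$ land in different components under $\tau$ $\iff$ they land in the same component under $\tau'$ $\iff$ $\tau'$ is not a p-switch, which is the claim. The main obstacle is bookkeeping: one must verify that deleting a $\Forest$-edge of a unicyclic graph always leaves the cycle intact in exactly one of the two resulting pieces (immediate, since the cycle uses no $\Forest$-edge), and then carry out the small case analysis on which pieces $a,b,c,d$ occupy cleanly enough that the ``exactly one of $\tau,\tau'$'' conclusion is transparent.
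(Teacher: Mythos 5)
Your argument is correct and supplies exactly the details the paper omits (the paper's proof is literally ``The proof is straightforward''): since $ab$ and $cd$ are bridges of $U$ and $U'$ respectively, deleting them leaves four pieces, exactly one piece of $U-ab$ and one of $U'-cd$ carrying a cycle, and swapping $c$ with $d$ swaps which $U'$-piece gets attached to the cycle-carrying piece of $U$, so exactly one of $\tau,\tau'$ puts the two cycles in a common component. Be aware that your opening paragraph asserts that $\tau$ merges $U\dot\cup U'$ into a single connected component; this is false (the result always has exactly two components, since two bridge deletions create four pieces and the two new edges each join a $U$-piece to a $U'$-piece), but your later analysis does not rely on it and is the correct criterion: $\tau$ fails to be a p-switch precisely when both cycles land in the same component. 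I would simply delete the first paragraph and keep the case analysis.
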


\begin{proof}
	The proof is straightforward. 
\end{proof}

The next Theorem characterizes when a 2-switch over a pseudoforest gives another pseudoforest.

\begin{theorem}
	\label{pswitchcaract}
	
	Let $\tau= {{a \ b}\choose{c \ d}}$ be a nontrivial 2-switch on a pseudoforest $G$. Then the following statements hold.
	\begin{enumerate}
		\item If \(ab\) and \(cd\) are in different components of \(\Forest(U)\) for some unicyclic component \(U\) of \(G\)
		and \(e \in E(\Cycles(U))\), then \(\tau\) is a p-switch if and only if it is a t-switch on \( U-e\).
		
		\item If \( ab \in E(\Forest(U))\) and \(cd \in E(\Forest(U'))\) for two different unicyclic components \(U\) and \( U'\) of \(G\),
		then \(\tau\) is a p-switch if and only if ${{a \ b}\choose{d \ c}}$ is not a p-switch.
		\item In any other case \(\tau\) is a p-switch.
	\end{enumerate}
\end{theorem}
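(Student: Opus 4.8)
The plan is to prove Theorem~\ref{pswitchcaract} by a careful case analysis that mirrors the earlier results, reducing every situation to either Theorem~\ref{uswitchcaract}, Lemma~\ref{lem2switchpswitch}, Lemma~\ref{2switch.in.C_U=pswitch.in.U}, Lemma~\ref{lema2switchpswitch}, or Lemma~\ref{lemma2pswitchFUFU'}. First I would observe that a nontrivial 2-switch $\tau={{a\ b}\choose{c\ d}}$ on $G$ either acts within a single component of $G$ or acts across two components (the edges $ab$ and $cd$ lie in different components). Within that dichotomy, each of $ab$ and $cd$ is either a forest edge or a cycle edge of the (pseudo)component containing it, so there is a finite list of combinations to check. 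The point of the theorem is that the only combinations that are \emph{not} automatically p-switches are precisely the two listed in (i) and (ii), and in those two cases the stated necessary-and-sufficient condition is exactly what is needed.

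Next I would dispatch the easy cases that land in item (iii). If $ab,cd$ both lie in forest parts (of the same or different components, but \emph{not} in the situation of (i) or (ii)), then either $\tau$ acts entirely inside a forest/forest configuration covered by Lemma~\ref{lem2switchpswitch}, or $ab$ and $cd$ lie in the same tree component or the same component of $\Forest(U)$ — and a 2-switch within a single tree (or within $\Forest(U)$ together with the unicyclic structure) never increases cyclicity, by Lemma~\ref{lem2switchpswitch}(i) applied after deleting a cycle edge, exactly as in the proof of Theorem~\ref{uswitchcaract}(ii). If at least one of $ab,cd$ is a cycle edge of its component, Lemma~\ref{lema2switchpswitch} already handles every configuration except the cross-component forest/forest case, so (iii) follows. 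For item (i), $ab$ and $cd$ lie in two different components of $\Forest(U)$ for a single unicyclic component $U$; fixing $e\in E(\Cycles(U))$, the graph $U-e$ is a tree and $\tau$ acts as a 2-switch on it. Using the standing observation that $e\notin\tau(U-e)$ (because $|e\cap\{a,b,c,d\}|\le 1$, or more carefully because the cycle $C$ cannot survive if $\tau$ is a t-switch, as in Lemma~\ref{lem.tswitch.U-e}), we get $\tau(G)=\tau(U-e)+e$ on that component, so $\tau(G)$ is a pseudoforest iff $\tau(U-e)$ is a tree, i.e. iff $\tau$ is a t-switch on $U-e$. For item (ii), $ab\in E(\Forest(U))$, $cd\in E(\Forest(U'))$ with $U\neq U'$ unicyclic: this is literally the content of Lemma~\ref{lemma2pswitchFUFU'}, which says $\tau$ is a p-switch iff ${{a\ b}\choose{d\ c}}$ is not.

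The remaining subtlety — and the part I expect to be the main obstacle — is \emph{verifying that the case list is exhaustive and mutually exclusive in the right way}, i.e. that when none of the hypotheses of (i) or (ii) holds, we really are in a configuration covered by the lemmas. The delicate point is the case where $ab\in E(\Forest(U))$ and $cd\in E(\Forest(U))$ but $ab$ and $cd$ lie in the \emph{same} component of $\Forest(U)$ (so (i) does not apply): here one must argue directly, e.g. by picking $e\in E(\Cycles(U))$, noting $U-e$ is a tree, that $\tau$ restricted to $U-e$ is a 2-switch within a single tree and hence a p-switch by stability of component count, and that $e\notin\tau(U-e)$ so $\tau(G)=\tau(U-e)+e$ remains unicyclic on that component. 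One must also handle the case where $ab$ lies in the forest part of a unicyclic component $U$ while $cd$ lies in a \emph{tree} component of $G$ (and symmetric variants): this is covered by Lemma~\ref{lem2switchpswitch}(ii) after possibly swapping roles, since deleting a cycle edge of $U$ makes the whole ambient graph a forest and a cross-component 2-switch on a forest is a p-switch. Once every leftover configuration is matched to exactly one of these cited results, the theorem follows.
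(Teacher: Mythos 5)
Your overall strategy matches the paper's: reduce each configuration of $ab$ and $cd$ to Theorem~\ref{uswitchcaract} and Lemmas~\ref{lem2switchpswitch}, \ref{2switch.in.C_U=pswitch.in.U}, \ref{lema2switchpswitch} and \ref{lemma2pswitchFUFU'}, and your treatment of items (i) and (ii) is essentially the paper's. The problem is the sub-case you yourself single out as the delicate one: $ab,cd\in E(\Forest(U))$ lying in the \emph{same} component of $\Forest(U)$. Your argument there --- $\tau$ restricted to the tree $U-e$ is a p-switch by stability of the component count, $e\notin\tau(U-e)$, ``so $\tau(G)=\tau(U-e)+e$ remains unicyclic on that component'' --- does not go through. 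When $\tau$ is not a t-switch on $U-e$, the graph $\tau(U-e)$ is the disjoint union of a unicyclic graph and a tree, and re-inserting $e$ may land inside the unicyclic part, producing a component with two independent cycles. Concretely, let $U$ be the triangle $rst$ with the tree $rv,\ va,\ ab,\ vc,\ cd$ attached at $r$, and let $\tau={{a\ b}\choose{c\ d}}$. Then $\tau$ is nontrivial, both $ab$ and $cd$ lie in the component of $\Forest(U)$ containing $r$, and $\tau(U)$ has a component on $\{r,s,t,v,a,c\}$ with $7$ edges containing the two triangles $rst$ and $vac$; hence $c(\tau(U))=2$ and $\tau$ is not a p-switch.

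This is not a repairable slip in your write-up: the example shows that statement (iii) of the theorem is itself false for this configuration, so no argument can close the gap without amending the statement. Note also that the criterion of (i) is not the right substitute in this sub-case, since when both edges hang off the same cycle vertex $\tau$ can fail to be a t-switch on $U-e$ and yet still be a p-switch (the new cycle and the old one can end up in different components). For what it is worth, the paper's own proof has the same hole: it dismisses ``any other case'' by citing Lemmas~\ref{lem2switchpswitch} and~\ref{lema2switchpswitch}, neither of which covers two forest edges inside one unicyclic component. By contrast, your reduction in case (i) is sound, because when $ab$ and $cd$ hang off different cycle vertices $r\neq r'$ the connecting path between the two edges passes through both $r$ and $r'$, which forces all of $\Cycles(U)-e$ into the middle component created by deleting $ab$ and $cd$; that is precisely the observation needed to conclude $c(\tau(U))\geq 2$ when $\tau$ is not a t-switch on $U-e$, and it is exactly what breaks down when both edges lie in the same component of $\Forest(U)$.
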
 

\begin{proof}
	If (i) holds, then $\tau$ is a u-switch in $U$ by Theorem \ref{uswitchcaract}. Hence, it is a p-switch over $G$. 
	
	If $ab\in  E(\Forest(U))$, $cd\in E(\Forest(U'))$ and ${{a \ b}\choose{d \ c}}$ is not a p-switch, then $\tau$ is a p-switch by Lemma \ref{lemma2pswitchFUFU'}.
	
	In order to check that $\tau$ is a p-switch in $G$ in any other case apply Lemmas \ref{lem2switchpswitch} and \ref{lema2switchpswitch}. 

	For the converse, suppose that $\tau$ does not operate as described above. If $\tau$ is not a t-switch in $U-e$ when $ab,cd$ are in different components of $\Forest(U)$ and $e\in E(\Cycles(U))$, it is easy to see that $c(\tau(U))\geq 2$. Hence, $\tau$ is not a p-switch in $G$. If ${{a \ b}\choose{d \ c}}$ is a p-switch in $G$, when $ab\in E(\Forest(U))$ and $cd\in E(\Forest(U'))$, then by Lemma \ref{lemma2pswitchFUFU'} $\tau$ is not a p-switch in $G$.	
\end{proof}

%
%

\section{Pseudoforest graphs with a given degree sequence are a Berge family}

\begin{lemma}
	\label{pseudosize}
	
	If $G$ is a pseudoforest, then $||G||+\kappa(G)=|G|+\cycles(G)$.
\end{lemma}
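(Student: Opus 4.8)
The plan is to prove the identity $\|G\| + \kappa(G) = |G| + \cycles(G)$ for a pseudoforest $G$ by decomposing $G$ into its connected components and handling each component separately, since both sides of the equation are additive over components. Write $\mathcal{K}(G) = \{H_1, \dots, H_k\}$, so that $\|G\| = \sum_i \|H_i\|$, $|G| = \sum_i |H_i|$, $\kappa(G) = k = \sum_i 1$, and $\cycles(G) = \sum_i \cycles(H_i)$ (this last equality uses that a subgraph isomorphic to a cycle is connected, hence lies entirely in one component). Thus it suffices to prove, for each component $H$, that $\|H\| + 1 = |H| + \cycles(H)$.

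Now each component $H$ of a pseudoforest is either a tree or a unicyclic graph, by Definition \ref{pseudodef} (a pseudoforest has cyclicity in $\{0,1\}$, so each component contains at most one cycle; a connected graph with no cycle is a tree, and a connected graph with exactly one cycle is unicyclic). If $H$ is a tree, then $\|H\| = |H| - 1$ and $\cycles(H) = 0$, so $\|H\| + 1 = |H| = |H| + \cycles(H)$, as required. If $H$ is unicyclic, then by the definition of unicyclic graph given in Section 2 we have $\|H\| = |H|$, and $\cycles(H) = 1$ since $H$ contains exactly one cycle; hence $\|H\| + 1 = |H| + 1 = |H| + \cycles(H)$. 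In both cases the per-component identity holds, and summing over all components of $G$ gives $\|G\| + \kappa(G) = |G| + \cycles(G)$.

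There is essentially no obstacle here: the only point that requires a word of care is the claim that $\cycles$ is additive over connected components, i.e. that every subgraph of $G$ isomorphic to a cycle is contained in a single component — this is immediate because a cycle graph is connected. The remaining content is just the two standard facts $\|T\| = |T| - 1$ for a tree $T$ and $\|U\| = |U|$ for a unicyclic graph $U$, the latter being recorded explicitly in the paper. I would present the argument in the three short steps above: reduce to components, dispatch the tree case, dispatch the unicyclic case.
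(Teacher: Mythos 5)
Your proof is correct, but it takes a slightly different route from the paper's. You reduce to connected components and do a two-case analysis (tree versus unicyclic component), using $\|T\|=|T|-1$ and $\|U\|=|U|$ together with the additivity of $\|\cdot\|$, $|\cdot|$, $\kappa$ and $\cycles$ over components. The paper instead argues globally: it deletes one edge from each cycle of $G$ to obtain a spanning forest $F$ with $\kappa(F)=\kappa(G)$, then combines the two identities $\|F\|=|G|-\kappa(G)$ and $\|F\|=\|G\|-\cycles(G)$. The two arguments are essentially equally short; yours makes explicit the structural fact that each component of a pseudoforest is a tree or unicyclic graph (and correctly flags why $\cycles$ is additive over components), while the paper's version quietly relies on the fact that an edge lying on a cycle is not a bridge, so that the deletions do not change $\kappa$. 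Either write-up is acceptable; your component-wise version is arguably a bit more self-contained.
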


\begin{proof}
If we remove an edge from every cycle of $G$, then we obtain a generating forest $F\leq G$ such that $\kappa(F)=\kappa(G)$. Therefore, $\left\|F\right\|=|G|-\kappa(G)$. On the other hand, $\left\|F \right\|=\left\|G \right\|-\cycles(G)$ and hence $\left\|G \right\|-\cycles(G)=|G|-\kappa(G)$.
\end{proof}
\begin{lemma}
	The function $\zeta :\mathcal{P}(s)\rightarrow \mathbb{Z}$ defined by $\zeta(G):= \kappa(G)-\cycles(G)$ is a non-negative constant.
\end{lemma}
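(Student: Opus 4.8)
The plan is to combine the counting identity of Lemma \ref{pseudosize} with the fact that the degree sequence $s$ fixes both $|G|$ and $\|G\|$. First I would observe that if $G\in\mathcal{P}(s)$ with $s=(d_1,\dots,d_n)$, then $|G|=n$ is determined by $s$, and $\|G\|=\tfrac{1}{2}\sum_{i=1}^n d_i$ is also determined by $s$ (this is just the handshake lemma). Rearranging the identity of Lemma \ref{pseudosize}, namely $\|G\|+\kappa(G)=|G|+\cycles(G)$, gives
\[
\zeta(G)=\kappa(G)-\cycles(G)=|G|-\|G\|.
\]
Since the right-hand side depends only on $s$, the value $\zeta(G)$ is the same for every $G\in\mathcal{P}(s)$; that is, $\zeta$ is constant.

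For non-negativity I would argue componentwise: each component $H$ of a pseudoforest is either a tree, in which case $\|H\|=|H|-1$ and $\cycles(H)=0$, or a unicyclic graph, in which case $\|H\|=|H|$ and $\cycles(H)=1$; in both cases $|H|-\|H\|=1-\cycles(H)\ge 0$. Summing over the $\kappa(G)$ components yields $|G|-\|G\|=\sum_{H\in\mathcal{K}(G)}\big(1-\cycles(H)\big)=\kappa(G)-\cycles(G)\ge 0$, where the last inequality uses that each component contributes at most one cycle (equivalently, $\cycles(G)\le\kappa(G)$ since $c(G)\le 1$). This also re-derives constancy and is arguably the cleaner route, so I would likely present the componentwise computation as the main body and note the one-line consequence that the value equals $|G|-\|G\|$, hence depends only on $s$.

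There is essentially no obstacle here: the only thing to be careful about is making sure Lemma \ref{pseudosize} is invoked with the right orientation and that the isolated-vertex / trivial-tree cases are covered (a trivial tree is a tree with $\|H\|=0=|H|-1$... — no, $|H|=1$ so $\|H\|=0=|H|-1$ holds), which the componentwise argument handles automatically. The statement is a short corollary, and the write-up should be only a few lines.
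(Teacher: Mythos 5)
Your proposal is correct and follows essentially the same route as the paper: both rearrange Lemma \ref{pseudosize} to get $\zeta(G)=|G|-\|G\|$, note that order and size are fixed by $s$ to conclude constancy, and derive non-negativity from the fact that each component of a pseudoforest contains at most one cycle. Your componentwise computation merely spells out in more detail what the paper dispatches by citing Definition \ref{pseudodef}.
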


\begin{proof}
	By Lemma \ref{pseudosize}, we have $|G|-||G||=\kappa(G)-\cycles(G)=\zeta(G)$. By Definition \ref{pseudodef}, $\zeta\geq 0$. Since all members of $\mathcal{P}(s)$ have the same order and size, $\zeta$ is constant.
\end{proof}


\begin{corollary}\label{coro_ciclos}
	If $G,H\in \mathcal{P}(s)$, then $\cycles(G)=\kappa(G)$ if and only if $\cycles(H)=\kappa(H)$.
\end{corollary}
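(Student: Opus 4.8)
The plan is to leverage Lemma~\ref{pseudosize} and the preceding lemma, which together show that $\zeta(G) = \kappa(G) - \cycles(G) = |G| - \|G\|$ is a constant on all of $\mathcal{P}(s)$. Since $G, H \in \mathcal{P}(s)$ share the same degree sequence $s$, they have the same order $n = |G| = |H|$ (the length of $s$) and the same size $\|G\| = \|H\| = \tfrac{1}{2}\sum_i d_i$. Hence $\zeta(G) = |G| - \|G\| = |H| - \|H\| = \zeta(H)$.

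Now I would simply unwind the definitions. The condition $\cycles(G) = \kappa(G)$ is equivalent to $\zeta(G) = \kappa(G) - \cycles(G) = 0$, and likewise $\cycles(H) = \kappa(H)$ is equivalent to $\zeta(H) = 0$. Since $\zeta(G) = \zeta(H)$, one of these equals zero if and only if the other does. This gives the biconditional immediately.

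There is essentially no obstacle here: the corollary is a one-line consequence of the constancy of $\zeta$ on $\mathcal{P}(s)$, which was already established. The only thing to be careful about is noting explicitly that two pseudoforests with the same degree sequence necessarily have the same number of vertices and the same number of edges, so that the formula $\zeta(G) = |G| - \|G\|$ from the previous lemma's proof yields the same value for both. I would phrase the proof in one or two sentences, citing the previous lemma for the constancy of $\zeta$ and then observing that $\cycles(\cdot) = \kappa(\cdot)$ is precisely the statement $\zeta(\cdot) = 0$.
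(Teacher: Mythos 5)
Your proof is correct and follows exactly the route the paper intends: the corollary is stated without proof as an immediate consequence of the preceding lemma establishing that $\zeta(G)=\kappa(G)-\cycles(G)$ is constant on $\mathcal{P}(s)$, and your observation that $\cycles(\cdot)=\kappa(\cdot)$ is precisely $\zeta(\cdot)=0$ is the intended one-line argument.
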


%
%
\begin{corollary}\label{coro_ciclos2}
	Let $G$ and $H$ be pseudoforests such that $\cycles(G)=\kappa(G)$ and $\cycles(H)<\kappa(H)$. Then, there is no sequence of 2-switches transforming $G$ into $H$.
\end{corollary}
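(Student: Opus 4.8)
The plan is to argue by contradiction, reducing the statement to Corollary~\ref{coro_ciclos}. Suppose, for the sake of contradiction, that there were a finite sequence of 2-switches transforming $G$ into $H$. Each individual 2-switch preserves the degree sequence (trivially when the action matrix is trivial for the current graph, and by the elementary fact recalled in Section~\ref{intro} when it is interchangeable), so an immediate induction on the length of the sequence yields $s(G)=s(H)$. Writing $s:=s(G)=s(H)$, we conclude that both $G$ and $H$ lie in $\mathcal{P}(s)$, since they are pseudoforests by hypothesis.

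At this point I would simply invoke Corollary~\ref{coro_ciclos}: for two pseudoforests in the same family $\mathcal{P}(s)$ one has $\cycles(G)=\kappa(G)$ if and only if $\cycles(H)=\kappa(H)$. Our hypothesis gives $\cycles(G)=\kappa(G)$, so the corollary forces $\cycles(H)=\kappa(H)$, which contradicts the standing assumption $\cycles(H)<\kappa(H)$. Hence no such sequence of 2-switches can exist, which is the claim.

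There is essentially no hard step here; the only point worth flagging is that the argument does \emph{not} require the intermediate graphs of the hypothetical sequence to be pseudoforests. It uses only that the two endpoints $G$ and $H$ are pseudoforests with a common degree sequence — exactly the hypothesis of Corollary~\ref{coro_ciclos}. Conceptually, the statement just records that the quantity $\kappa(G)-\cycles(G)$, which by Lemma~\ref{pseudosize} equals the 2-switch invariant $|G|-\|G\|$, is constant on each $\mathcal{P}(s)$: a pseudoforest all of whose components are unicyclic (the case $\cycles(G)=\kappa(G)$) can therefore never be turned by 2-switches into a pseudoforest possessing a genuine tree component (the case $\cycles(H)<\kappa(H)$).
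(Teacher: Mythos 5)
Your proof is correct and matches the paper's intent exactly: the paper states Corollary~\ref{coro_ciclos2} without proof as an immediate consequence of Corollary~\ref{coro_ciclos}, which is precisely the reduction you carry out (2-switches preserve the degree sequence, so both endpoints lie in the same $\mathcal{P}(s)$ and the constancy of $\kappa-\cycles$ gives the contradiction). Your observation that the intermediate graphs need not be pseudoforests is a worthwhile clarification, but the argument is the same one the paper relies on.
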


%

\begin{lemma}
	\label{pseudo>>>unic}
	
	Every pseudoforest $G$ with $\cycles(G)=\kappa(G)$ can be transformed into a unicyclic graph by a sequence of p-switches.
\end{lemma}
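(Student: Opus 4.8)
The plan is to induct on the number of components $\kappa(G)$. First I would record the structural consequence of the hypothesis: since $G$ is a pseudoforest and $\cycles(G)=\kappa(G)$, every connected component of $G$ contains exactly one cycle, so $G$ is a disjoint union of $\kappa(G)$ unicyclic graphs. If $\kappa(G)=1$ there is nothing to prove, as $G$ is already unicyclic and the empty sequence $(\varnothing)$ works. So assume $\kappa(G)\geq 2$.

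The inductive step performs a single p-switch that merges two unicyclic components into one unicyclic component while leaving all other components untouched. Pick two distinct unicyclic components $U$ and $U'$ of $G$, and choose edges $ab\in E(\Cycles(U))$ and $cd\in E(\Cycles(U'))$ (each cycle has length at least $3$, so such edges exist). Because $U$ and $U'$ are different components, $\{a,b\}\cap\{c,d\}=\varnothing$ and $ac,bd\notin E(G)$ hold automatically, so $\tau={{a\ b}\choose{c\ d}}$ is a nontrivial 2-switch over $G$; by Lemma \ref{lema2switchpswitch} (case (ii), subcase (B)) it is a p-switch.

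Next I would verify that $\tau(G)$ is again a disjoint union of unicyclic graphs, now with $\kappa(G)-1$ components. Since $\tau$ only touches edges of $U$ and $U'$, every other component of $G$ survives intact in $\tau(G)$. On $V(U)\cup V(U')$, the graph $\tau(G)$ restricts to $(U-ab)\,\dot{\cup}\,(U'-cd)$ with the two new edges $ac$ and $bd$ added. Deleting a cycle edge from a unicyclic graph leaves a connected tree, so $U-ab$ and $U'-cd$ are each connected; the edge $ac$ joins them, hence this restriction is connected. It has $(\|U\|-1)+(\|U'\|-1)+2=|U|+|U'|$ edges on $|U|+|U'|$ vertices, so it is connected with exactly one cycle, i.e. a single unicyclic component. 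Therefore $\tau(G)$ is a pseudoforest with $\kappa(\tau(G))=\kappa(G)-1$ and $\cycles(\tau(G))=\kappa(\tau(G))$, so the inductive hypothesis yields a p-switch sequence transforming $\tau(G)$ into a unicyclic graph; prepending $\tau$ gives the desired sequence for $G$.

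I expect the only genuinely delicate point to be the verification in the previous paragraph that switching two cycle edges belonging to two different unicyclic components produces a single unicyclic component, rather than leaving two cycles in one component or producing an unexpected split — that is, the connectivity and edge-count bookkeeping. This is essentially the content of the case (ii)(B) argument inside the proof of Lemma \ref{lema2switchpswitch}, so it should go through cleanly.
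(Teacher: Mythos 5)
Your proof is correct and follows essentially the same strategy as the paper: merge two unicyclic components by a 2-switch between one cycle edge of each, observe this is a p-switch producing a single unicyclic component, and iterate until connected. The only cosmetic differences are that you phrase the iteration as an explicit induction on $\kappa(G)$ and verify the merged component is unicyclic by direct edge counting, where the paper instead cites the proof of Lemma \ref{lema2switchpswitch} and invokes Corollary \ref{coro_ciclos} at the end.
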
  

\begin{proof}
	If $G$ is connected, we are done. If $\kappa(G)\geq 2$, then observe that we can ``glue together'' 2 components $U,U'$ of $G$ by performing a 2-switch $\tau$ between $e_{1}\in E(\Cycles(U))$ and $e_{2}\in E(\Cycles(U'))$. By Theorem \ref{pswitchcaract}, we know that $\tau$ is a p-switch in $G$, and by the proof of Lemma \ref{lema2switchpswitch} we know that $\tau(U\dot{\cup}U')$ is a unicyclic graph. Now, $\kappa(\tau(G))=\kappa(G)-1$. Therefore, we repeat the process until we obtain a connected pseudoforest $H$. By Corollary \ref{coro_ciclos}, $\cycles(H)=\kappa(H)=1$. Thus, $H$ is a unicyclic graph.
\end{proof}

\begin{lemma}
	\label{pseudo>>>forest}
	
	Every pseudoforest $G$ with $\cycles(G)<\kappa(G)$ can be transformed into a forest by a sequence of p-switches.
\end{lemma}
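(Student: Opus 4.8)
The plan is to induct on $\cycles(G)$, draining one cycle at a time into a tree component. If $\cycles(G)=0$ every component of $G$ is a tree, so $G$ is already a forest and there is nothing to do; assume $\cycles(G)\geq 1$. Since $\cycles(G)<\kappa(G)$, the graph $G$ has a unicyclic component $U$ and a tree component $T$; I would choose $T$ so that it has at least one edge $ab$ (the point that needs care, see below) and pick any $cd\in E(\Cycles(U))$. Because $T$ is a tree, $ab\in E(\Forest(G))$; because $a,b\in V(T)$ and $c,d\in V(U)$ lie in distinct components of $G$, we get $\{a,b\}\cap\{c,d\}=\varnothing$ and $ac,bd\notin E(G)$. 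Hence $\tau={{a \ b}\choose{c \ d}}$ is a nontrivial $2$-switch on $G$.

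Next I would verify that $\tau$ is a p-switch and that it kills exactly one cycle. Since $cd\in E(\Cycles(G))$, the switch $\tau$ is neither the ``$ab,cd$ in different components of $\Forest(U')$'' case nor the ``two forest edges of distinct unicyclic components'' case, so it falls into case (iii) of Theorem \ref{pswitchcaract} and is a p-switch (equivalently this is subcase (i.B) of Lemma \ref{lema2switchpswitch}). For the cycle count: $U-cd$ is a tree (we removed the edge of the unique cycle of $U$), $T-ab$ splits into trees $T_{a}\ni a$ and $T_{b}\ni b$, and in $\tau(G)$ the edge $ac$ joins $T_{a}$ to $U-cd$ while $bd$ joins $T_{b}$ to $U-cd$; thus the components $T$ and $U$ are replaced by a single tree while all other components of $G$ are untouched, so $\cycles(\tau(G))=\cycles(G)-1$. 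Since $\tau$ is a p-switch, $\tau(G)\in\mathcal{P}(s)$, and as $\kappa-\cycles$ is constant on $\mathcal{P}(s)$ we still have $\cycles(\tau(G))<\kappa(\tau(G))$; applying the induction hypothesis to $\tau(G)$ completes the argument.

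The step I expect to be the real obstacle is the parenthetical in the first paragraph: producing a tree component that actually carries an edge. If every tree component of $G$ is a single isolated vertex, then the union of the non-trivial components of $G$ has exactly as many edges as vertices, so no forest has degree sequence $s$ and the statement is literally false; one therefore needs the hypothesis $\mathcal{F}(s)\neq\varnothing$ (equivalently, that $G$ has a non-trivial tree component). Under that hypothesis, writing $z$ for the number of degree-$0$ entries of $s$ and $\zeta=\kappa-\cycles$ (which is constant on $\mathcal{P}(s)$), a forest in $\mathcal{P}(s)$ forces $\zeta>z$; since every member of $\mathcal{P}(s)$ has exactly $z$ isolated vertices and exactly $\zeta$ acyclic components, each such graph — in particular each $\tau(G)$ arising in the induction — does possess a non-trivial tree component, so the induction goes through. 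I would add this hypothesis (or state the lemma only in that case); it is harmless for the intended application, where a forest in $\mathcal{P}(s)$ is precisely the target one is aiming for.
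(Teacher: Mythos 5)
Your proof is correct but takes a genuinely different route from the paper's. The paper writes $G=H\dot{\cup}F$ with $H$ the union of the unicyclic components and $F$ the union of the acyclic ones, first glues all of $H$ into a single unicyclic component $U$ by repeated switches between cycle edges (Lemma \ref{pseudo>>>unic}), and only then performs one switch between an edge of $F$ and an edge of $\Cycles(U)$. You instead induct on $\cycles(G)$ and drain one cycle at a time into a nontrivial tree component, invoking case (i.B) of Lemma \ref{lema2switchpswitch} (equivalently case (iii) of Theorem \ref{pswitchcaract}) at each step. Both arguments rest on the same fact --- a switch between a forest edge and a cycle edge lying in different components is a p-switch that destroys the cycle --- so neither buys much over the other; yours avoids the detour through Lemma \ref{pseudo>>>unic} at the cost of using one forest edge per cycle instead of a single one.

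Your parenthetical worry is not a defect of your write-up but a genuine gap in the lemma as stated, and the paper's own proof shares it: the paper silently picks $e_{1}\in E(F)$, which does not exist when every acyclic component of $G$ is an isolated vertex. In that case (e.g.\ $G$ a triangle plus one isolated vertex, so $\cycles(G)=1<2=\kappa(G)$ and $s=(2,2,2,0)$) no forest has degree sequence $s$, so the conclusion is actually false and a hypothesis such as $\mathcal{F}(s)\neq\varnothing$ must be added. Your counting argument --- $\zeta=\kappa-\cycles$ is constant on $\mathcal{P}(s)$ and exceeds the number of zero entries of $s$ whenever $\mathcal{F}(s)\neq\varnothing$ and $s$ is not identically zero --- correctly shows that the extra hypothesis guarantees a nontrivial tree component at every stage of your induction, and the same observation repairs the paper's proof; Theorem \ref{pseudoTransition} then needs a separate (easy) normal form, a single unicyclic component plus isolated vertices, for the excluded case.
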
  

\begin{proof}
	Every pseudoforest $G$ with $\cycles(G)<\kappa(G)$ can be written as $G=H\dot{\cup}F$, where $H$ is a pseudoforest such that each of its components is a unicyclic graph, i.e. $\cycles(H)=\kappa(H)$, and $F$ is a forest. Then, we can apply Lemma \ref{pseudo>>>unic} to $H$ to obtain from $G$ a pseudoforest $G'=U\dot{\cup}F$, where $U$ is a unicyclic graph. Now, perform a 2-switch $\tau$ between $e_{1}\in E(F)$ and $e_{2}\in E(\Cycles(U))$. Then, $\tau$ is a p-switch by Theorem \ref{pswitchcaract} and $\tau(G')$ is a forest by the proof of Lemma \ref{lema2switchpswitch}.
\end{proof}

We can now prove that the family of unicyclic graphs of a given degree sequence is a Berge family.
\begin{theorem}
	\label{pseudoTransition}
	
	If $G,H\in \mathcal{P}(s)$, then there exists a sequence of p-switches transforming $G$ into $H$.
\end{theorem}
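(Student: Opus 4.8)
The plan is to reduce the general case of two pseudoforests $G,H\in\mathcal{P}(s)$ to the already-established transition theorems for forests (Theorem \ref{TTT}) and unicyclic graphs (Theorem \ref{unicTransition}), by first normalizing both $G$ and $H$ into a common ``canonical shape'' using p-switches. The key dichotomy is supplied by Corollary \ref{coro_ciclos}: either $\cycles(G)=\kappa(G)$ and $\cycles(H)=\kappa(H)$, or $\cycles(G)<\kappa(G)$ and $\cycles(H)<\kappa(H)$. I would treat these two cases separately.

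In the first case, $\cycles(G)=\kappa(G)$, apply Lemma \ref{pseudo>>>unic} to both $G$ and $H$ to obtain p-switch sequences transforming $G$ into a unicyclic graph $U$ and $H$ into a unicyclic graph $U'$. Since p-switches preserve the degree sequence, $U,U'\in\mathcal{U}(s)$, so by Theorem \ref{unicTransition} there is a u-switch sequence from $U$ to $U'$; every u-switch is a p-switch, so concatenating (the first sequence, then this one, then the reverse of the second sequence — noting that the reverse of a p-switch is again a p-switch) gives a p-switch sequence from $G$ to $H$. In the second case, $\cycles(G)<\kappa(G)$, apply Lemma \ref{pseudo>>>forest} to both $G$ and $H$ to get p-switch sequences transforming them into forests $F$ and $F'$ respectively; these lie in $\mathcal{F}(s)$, so Theorem \ref{TTT} gives an f-switch sequence (hence a p-switch sequence) from $F$ to $F'$, and again concatenation with the reverse of the second sequence finishes the argument.

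Two small points need to be checked to make the concatenation legitimate. First, that the inverse of a p-switch is a p-switch: if $\tau={{a\ b}\choose{c\ d}}$ is a p-switch on $G$, then $\tau':={{a\ c}\choose{b\ d}}$ sends $\tau(G)$ back to $G$ and, since $G=\tau'(\tau(G))\in\mathcal{P}(s)$, $\tau'$ is by definition a p-switch on $\tau(G)$; hence reversing a p-switch sequence yields a p-switch sequence read backwards. Second, the empty sequence handles $G=H$ and, more importantly, the composition is well-defined because all intermediate graphs have degree sequence $s$ and lie in $\mathcal{P}(s)$.

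I do not expect a serious obstacle here: the heavy lifting has already been done in Lemmas \ref{pseudo>>>unic} and \ref{pseudo>>>forest} and in Theorems \ref{TTT} and \ref{unicTransition}. The only mild subtlety is making sure the case split is exhaustive and well-posed, which is exactly what Corollary \ref{coro_ciclos} guarantees, and observing that reversibility of p-switches lets us ``meet in the middle'' rather than needing a single forward sequence all the way from $G$ to $H$.

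\begin{proof}
	By Corollary \ref{coro_ciclos}, either $\cycles(G)=\kappa(G)$ and $\cycles(H)=\kappa(H)$, or $\cycles(G)<\kappa(G)$ and $\cycles(H)<\kappa(H)$.

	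First observe that the inverse of a p-switch is a p-switch: if $\tau={{a\ b}\choose{c\ d}}$ is a p-switch on a pseudoforest $K$, then $\tau':={{a\ c}\choose{b\ d}}$ satisfies $\tau'(\tau(K))=K\in\mathcal{P}(s)$, so $\tau'$ is a p-switch on $\tau(K)$. Consequently, if $(\tau_i)_{1\leq i\leq m}$ is a p-switch sequence transforming a pseudoforest $K$ into a pseudoforest $K'$, reading it backwards gives a p-switch sequence transforming $K'$ into $K$.

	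Suppose $\cycles(G)=\kappa(G)$. By Lemma \ref{pseudo>>>unic} there is a p-switch sequence transforming $G$ into a unicyclic graph $U$, and a p-switch sequence transforming $H$ into a unicyclic graph $U'$. Since p-switches preserve the degree sequence, $U,U'\in\mathcal{U}(s)$, so by Theorem \ref{unicTransition} there is a u-switch sequence transforming $U$ into $U'$; every u-switch is a p-switch. Concatenating the p-switch sequence $G\rightsquigarrow U$, this u-switch sequence $U\rightsquigarrow U'$, and the reverse of the p-switch sequence $H\rightsquigarrow U'$ yields a p-switch sequence transforming $G$ into $H$.

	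Suppose instead $\cycles(G)<\kappa(G)$. By Lemma \ref{pseudo>>>forest} there is a p-switch sequence transforming $G$ into a forest $F$, and a p-switch sequence transforming $H$ into a forest $F'$. Then $F,F'\in\mathcal{F}(s)$, so by Theorem \ref{TTT} there is an f-switch sequence transforming $F$ into $F'$; every f-switch is a p-switch. Concatenating the p-switch sequence $G\rightsquigarrow F$, this f-switch sequence $F\rightsquigarrow F'$, and the reverse of the p-switch sequence $H\rightsquigarrow F'$ yields a p-switch sequence transforming $G$ into $H$.

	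In either case $G$ can be transformed into $H$ by a sequence of p-switches.
\end{proof}
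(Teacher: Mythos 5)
Your proposal is correct and follows essentially the same route as the paper: split on the dichotomy of Corollary \ref{coro_ciclos}, normalize both graphs via Lemma \ref{pseudo>>>unic} or Lemma \ref{pseudo>>>forest}, and finish with Theorem \ref{unicTransition} or Theorem \ref{TTT}. Your explicit verification that the inverse of a p-switch is a p-switch (needed to reverse the sequence from $H$) is a detail the paper leaves implicit, but it is the same argument.
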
 
\begin{proof} If $\cycles(G)=\kappa(G)$, then $\cycles(H)=\kappa(H)$ too, by Corollary \ref{coro_ciclos}. Now, apply Lemma \ref{pseudo>>>unic} to $G$ and $H$ to obtain respectively $U,U'\in \mathcal{U}(s)$. By Theorem \ref{unicTransition}, we can transform $U$ into $U'$ by a sequence of u-switches and hence we can transform $G$ into $H$ by a sequence of p-switches.

If $\cycles(G)<\kappa(G)$, then by Corollary \ref{coro_ciclos2} $\cycles(H)<\kappa(H)$. Now, apply Lemma \ref{pseudo>>>forest} to $G$ and $H$ to obtain respectively $F,F'\in \mathcal{F}(s)$. By the Forest Transition Theorem, we can transform $F$ into $F'$ by a sequence of f-switches and hence we can transform $G$ into $H$ by a sequence of p-switches.\end{proof}


Concluding remark: The families of graphs \(\mathcal{U} (s)\) and \(\mathcal{P} (s)\) are Berge families. Therefore, for any discrete graph parameter 2-switch stable, as matching number, independence number, domination number, path-covering number, edge-covering number, vertex-covering number, chromatic number, clique number, number of connected components, we have that for any \[k \in \left[ \min_{U \in \mathcal{U} (S)} \mathtt{parameter} (U),\, \max_{U \in \mathcal{U}(S)} \mathtt{parameter} (U) \right] \] there exists \(U_{k} \in \mathcal{U} (s)\) such that  \(\mathtt{parameter} (U_{k})=k\). A similar result holds for
pseudoforests in \(\mathcal{P} (s)\).

\section{Acknowledgments}

This work was partially supported by the Universidad Nacional de San Luis, grants PROICO 03-0918 and PROIPRO 03-1720, and by MATH AmSud, grant 21-MATH-05.


\end{document}